
\documentclass[oneside,american]{amsart}
\usepackage[T1]{fontenc}
\usepackage[utf8]{inputenc}
\usepackage{geometry}
\geometry{verbose}
\usepackage{color}
\usepackage{verbatim}
\usepackage{amstext}
\usepackage{amsthm}
\usepackage{amssymb}
\usepackage{graphicx}
\usepackage{esint}

\makeatletter
\numberwithin{equation}{section}
\numberwithin{figure}{section}
\theoremstyle{plain}
\newtheorem{thm}{\protect\theoremname}
  \theoremstyle{plain}
  \newtheorem{cor}[thm]{\protect\corollaryname}
  \theoremstyle{plain}
  \newtheorem{lem}[thm]{\protect\lemmaname}
  \theoremstyle{remark}
  \newtheorem{rem}[thm]{\protect\remarkname}
  \theoremstyle{plain}
  \newtheorem{prop}[thm]{\protect\propositionname}
  \theoremstyle{plain}
\newtheorem{defn}{\protect\definitionname}
 \theoremstyle{plain}
\newtheorem{ex}{\protect\examplename}



\usepackage{todonotes}

\makeatother

\usepackage{babel}
  \providecommand{\corollaryname}{Corollary}
  \providecommand{\lemmaname}{Lemma}
  \providecommand{\propositionname}{Proposition}
  \providecommand{\remarkname}{Remark}
\providecommand{\theoremname}{Theorem}
\providecommand{\definitionname}{Definition}
\providecommand{\examplename}{Example}

\begin{document}

\title{Hyperbolic polynomials and linear-type generating functions}

\author{Tam\'as Forg\'acs \and Khang Tran}

\maketitle
\global\long\def\Re{\operatorname{Re}}

\global\long\def\Im{\operatorname{Im}}

\global\long\def\Arg{\operatorname{Arg}}

\global\long\def\Log{\operatorname{Log}}

\section{Introduction}

The problem of describing the zero distribution of a sequence of polynomials remains an active area of research. From the classical methods of orthogonality to spectral theory of positive matrices asymptotic descriptions (i.e. approximate locations) there is a plethora of approaches to the problem. Naturally, depending on the approach, the methods used to investigate the problem can be quite different. In this paper we follow the approach employed in the works \cite{ft}, \cite{ft-1}, \cite{tran}, \cite{tz} and \cite{tz-1} as we analyze the zero location of a sequence of polynomials $\{H_m(z) \}_{m=1}^{\infty}$ generated by the relation 
\[
\sum_{m=0}^{\infty}H_{m}(z)t^{m}=\frac{1}{P(t)+zt^{r}Q(t)}, \qquad (r \in \mathbb{N})
\]
where $P$ and $Q$ are real stable polynomials with some restrictions on their zero locus. In two of our recent papers considering such problems  the choice of the generating functions was largely motivated by the theory of multiplier sequences (and stability preserving linear operators in general). As such, we considered the (family of) generating functions
\begin{equation} \label{eq:genfunctions}
 \frac{1}{(1-t)^n+zt^r} \qquad \text{and} \qquad \frac{1}{P(t)+zt^r},
\end{equation}
where $P(t)$ is a polynomial with only positive zeros, and showed that the sequence of polynomials generated by these functions is eventually hyperbolic (see \cite[p.632, \,Theorem 1]{ft} and \cite[p.619, \,Theorem 1]{ft-1}). Establishing that \textit{all} polynomials generated by functions of the type in \eqref{eq:genfunctions} have only real zeros -- not just the ones far enough out in the sequence -- remains and open problem. Its resolution (in the positive) is in fact quite desirable, as it would open up the avenue to extending the family of functions that generate hyperbolic polynomials using locally uniform approximation arguments. This paper generalizes the results in \cite{ft} and \cite{ft-1} by considering generating functions whose denominators are elements of $\mathbb{R}[t][z]$ with coefficient polynomials that are hyperbolic. The elements of $\mathbb{R}[t][z]$ we consider can be viewed as linear combinations of $1$ and $z$ with coefficients that are hyperbolic polynomials. In this light, connections between the properties of the coefficient polynomials and the stability of the generated sequence emerge, very much in the flavor of classical stability theory \`a la Hermite-Biehler.\\
\indent The Hermite-Biehler theorem (see for example \cite[p.\,197]{rs}) and some of its generalizations (\cite{es}, \cite{hdb1}, \cite{hdb2}) address the connections between the stability of a polynomial $f=p(x^2)+xq(x^2)$, and the interlacing of the zeros of its `constituents' $p$ and $q$. The cited generalizations study the extent to which one may still be able to draw conclusions about the location of the zeros of the constituents, even if the polynomial $f$ is not Hurwitz-stable. In particular, if $n_-$ (resp. $n_+$) denote the number of zeros of a polynomial $f$ in the left (resp. right) half plane, then the number of (interlacing) real zeros of its constituents is bounded below in terms of $|n_{-}-n_+|$. 
\newline
\indent If we regard $P(t)$ and $Q(t)$ as the 'constituents' of the polynomials $H_m(z)$, it would be reasonable to expect -- analogously to the Hermite--Biehler theory -- that the interlacing of the zeros of $P$ and $Q$ would imply hyperbolicity (real stability) of the polynomials $H_m(z)$. Alas, somewhat the contrary is true: the more separated the zeros of $P$ and $Q$ are, the 'better' in terms of the hyperbolicity of the generated sequence (c.f. Corollary \ref{cor:separatedzeros} and Remark \ref{rem:negzeroP}). \\
\indent The rest of the paper is organized as follows. Section \ref{sec:mainresult} contains the setup and statement of the main result. Section \ref{sec:tauz} is devoted to the development of two key functions $\tau(\theta)$ and $z(\theta)$, which allow us to identify points in the interval $(0, \pi/r)$ with the zeros of our generated polynomials $H_m(z)$ in a one-to-one fashion. In Section \ref{sec:completetheproof} we establish the stability of the polynomials $H_m(z)$ and complete the proof of Theorem \ref{thm:maintheorem} modulo three auxiliary lemmas. We prove these lemmas in the concluding section of the paper.
\section{The main result}\label{sec:mainresult}
Let 
\begin{equation}\label{eq:prodreppolys}
P(t)=\prod_{-p_{-}<k\le p_{+}}(t-\tau_{k}),\qquad\text{ and }\qquad Q(t)=\prod_{-q_{-}<k\le q_{+}}(t-\gamma_{k})
\end{equation}
be two hyperbolic polynomials with $p_{+}$, $q_{+}$ positive and $p_{-}$,
$q_{-}$ negative zeros respectively, and suppose that $P(0),Q(0)\ne0$. 
We arrange the zeros of $P(t)$ and $Q(t)$ in an increasing order
according to their indices. In particular, $\tau_{0}$ ($\gamma_0$ resp.) is
the largest negative zero of $P(t)$ ($Q(t)$ resp.), while $\tau_{1}$ ($\gamma_1$ resp.) is its smallest positive zero. 
\begin{defn} \label{def:zerocount} Let $P$ and $Q$ be polynomials. For each $x>0$, we let $n_{+}^{P}(x)$ and $n_{+}^{Q}(x)$ be the number of positive zeros of $P(t)$ and $Q(t)$ on $(0,x]$ counting multiplicity. Similarly, for each $x<0$, we let $n_{-}^{P}(x)$ and $n_{-}^{Q}(x)$ be the number of negative zeros of $P(t)$ and $Q(t)$ on $[x,0)$.
\end{defn}

In light of the Hermite-Biehler extensions discussed above, it is perhaps not surprising that the quantities $n_{+}^P-n_{+}^Q$ and $n_{-}^Q-n_{-}^P$ appear in what follows, as those controlling the extent to which the zeros of the constituents of the sequence $H_m(z)$ are allowed to intermingle without destroying the hyperbolicity of $H_m(z)$.

\begin{lem}
\label{lem:taexistence} Let $P, Q$ be as in \eqref{eq:prodreppolys}, $n_+^P$ and $n_+^Q$ be as in Definition \ref{def:zerocount}, and let 
\[
R(t)=r-\frac{tP'(t)}{P(t)}+\frac{tQ'(t)}{Q(t)}.
\] 
Suppose that 
\begin{itemize}
\item[(i)] $n_{+}^{P}(x)-n_{+}^{Q}(x)\ge2$ $\forall x\ge\tau_{2}$,
and $n_{+}^{Q}(x)=0$, $\forall x\in(0,\tau_{2}]$ and
\item[(ii)] $\Im R(t)>0$ on the sector $\{t \ \big| \ 0<|t|<\tau_{2},0<\Arg t<\pi/r\}$. 
\end{itemize} 
Under these assumptions $P(t)R(t)$ has a unique zero $t_{a}$
on $(\tau_{1},\tau_{2})$ (with multiplicity possibly bigger than
one) if $\tau_{1}<\tau_{2}$, and $t_{a}=\tau_{1}=\tau_{2}$ if $\tau_{1}=\tau_{2}$. Furthermore, $t_{a}$ is the smallest positive zero of $P(t)R(t)$. 
\end{lem}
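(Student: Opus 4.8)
The plan is to study $R$ directly on the real segment $(0,\tau_2)$, extract monotonicity of $R$ there from hypothesis (ii), and then transfer the conclusion to $P(t)R(t)$, using that the pole of $R$ at $\tau_1$ is exactly cancelled by the corresponding zero of $P$.

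First I would record the boundary behavior of $R$ on $(0,\tau_2)$. Writing $R(t)=r-t\sum_{k}(t-\tau_k)^{-1}+t\sum_k(t-\gamma_k)^{-1}$ and invoking hypothesis (i) (which guarantees $n_+^Q=0$ on $(0,\tau_2]$, so $Q(t)\neq0$ and the second sum stays bounded there), I would check that $R(0)=r>0$, that $R(t)\to+\infty$ as $t\to\tau_1^-$ and as $t\to\tau_2^-$, and that $R(t)\to-\infty$ as $t\to\tau_1^+$. Each limit follows from the sign of the single dominant term $-t/(t-\tau_j)$ near the consecutive positive zeros $\tau_1<\tau_2$, the remaining terms being bounded on a neighborhood of these points.

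The heart of the argument is converting hypothesis (ii) into monotonicity of $R$. Since $R$ is rational and real on the real axis, at any $x_0\in(0,\tau_1)\cup(\tau_1,\tau_2)$, where $R$ is analytic, the real coefficients of its Taylor series give $\Im R(x_0+iy)=yR'(x_0)+O(y^3)$. For small $y>0$ the point $x_0+iy$ lies in the sector, so (ii) forces $\Im R(x_0+iy)>0$; letting $y\to0^+$ yields $R'(x_0)\ge0$. Hence $R$ is nondecreasing on each of $(0,\tau_1)$ and $(\tau_1,\tau_2)$. On $(0,\tau_1)$ this gives $R\ge R(0^+)=r>0$, so neither $R$ nor $P(t)R(t)$ (as $P\neq0$ there) vanishes on $(0,\tau_1)$; a short residue computation yields $\lim_{t\to\tau_1}P(t)R(t)=-\tau_1 P'(\tau_1)\neq0$, so $\tau_1$ itself is not a zero of $P(t)R(t)$ either. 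On $(\tau_1,\tau_2)$ the nondecreasing real-analytic function $R$ runs from $-\infty$ to $+\infty$ and so vanishes; it cannot vanish twice, for otherwise it would be identically $0$ on a subinterval, contradicting analyticity. This produces the unique zero $t_a\in(\tau_1,\tau_2)$ of $R$, and since $P\neq0$ on $(\tau_1,\tau_2)$ it is the unique zero of $P(t)R(t)$ there, with the same (possibly $>1$) multiplicity; combined with the previous paragraph, $t_a$ is the smallest positive zero of $P(t)R(t)$.

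The degenerate case $\tau_1=\tau_2=:\tau$ I would treat separately: here $\tau$ is a zero of $P$ of multiplicity $m\ge2$, and the cancellation $P(t)R(t)\sim -m\tau\,c\,(t-\tau)^{m-1}$ near $\tau$ shows $t_a=\tau$ is a zero of $P(t)R(t)$, while the same nondecreasing argument on $(0,\tau)$ rules out smaller positive zeros. I expect the main obstacle to be the passage from the two-dimensional positivity in (ii) to the one-dimensional monotonicity of $R$ on the slit segment $(0,\tau_1)\cup(\tau_1,\tau_2)$ — in particular, verifying the boundary-limit expansion at every interior point and correctly accommodating flat points where $R'$ vanishes, which is precisely what keeps the zero count equal to one.
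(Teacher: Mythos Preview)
Your proposal is correct and follows essentially the same approach as the paper: both arguments extract $R'\ge 0$ on $(0,\tau_2)\setminus\{\tau_1\}$ from hypothesis (ii) by passing to the real axis, then use $R(0)=r>0$ together with the blow-ups $R(\tau_1^+)=-\infty$, $R(\tau_2^-)=+\infty$ to locate the unique zero. The only cosmetic difference is that the paper obtains $R'\ge 0$ via the explicit identity $\Im R(t)=\bigl(\sum_k \tau_k/|t-\tau_k|^2-\sum_j \gamma_j/|t-\gamma_j|^2\bigr)\Im t$ (which is reused later in the paper), whereas you use the local Taylor expansion $\Im R(x_0+iy)=yR'(x_0)+O(y^3)$; your handling of the nondecreasing-versus-increasing distinction and the explicit check that $P(t)R(t)$ does not vanish at $\tau_1$ when $\tau_1<\tau_2$ are in fact slightly more careful than the paper's version.
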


\begin{proof}
We note that for any $t\ne\tau_{k},\gamma_{j}$, $-q_-<j\leq q_+, -p_-<k\leq p_+$, we
have 
\begin{align}
\Im R(t) & =\Im\left(\sum_{-q_{-}<j\le q_{+}}\frac{t}{t-\gamma_{j}}-\sum_{-p_{-}<k\le p_{+}}\frac{t}{t-\tau_{k}}\right)\nonumber \\
 & =\Im\left(-\sum_{-q_{-}<j\le q_{+}}\frac{t\gamma_{j}}{|t-\gamma_{j}|^{2}}+\sum_{-p_{-}<k\le p_{+}}\frac{t\tau_{k}}{|t-\tau_{k}|^{2}}\right)\nonumber \\
 & =\left(\sum_{-p_{-}<k\le p_{+}}\frac{\tau_{k}}{|t-\tau_{k}|^{2}}-\sum_{-q_{-}<j\le q_{+}}\frac{\gamma_{j}}{|t-\gamma_{j}|^{2}}\right)\Im t.\label{eq:ImRexp}
\end{align}
Thus condition (ii) implies that 
\[
\sum_{-p_{-}<k\le p_{+}}\frac{\tau_{k}}{|t-\tau_{k}|^{2}}-\sum_{-q_{-}<j\le q_{+}}\frac{\gamma_{j}}{|t-\gamma_{j}|^{2}}>0
\]
for all $t\in\{t|0<|t|<\tau_{2},0<\Arg t<\pi/r\}$. We let $t$ approach
the $x$-axis within this sector to conclude that 
\begin{equation}
R'(t)=\sum_{-p_{-}<k\le p_{+}}\frac{\tau_{k}}{(t-\tau_{k}){}^{2}}-\sum_{-q_{-}<j\le q_{+}}\frac{\gamma_{j}}{(t-\gamma_{j})^{2}}\ge0,\qquad\forall t\in[0,\tau_{2})\backslash\{\tau_{1}\}.\label{eq:Rprime}
\end{equation}
If $\tau_{1}<\tau_{2}$, the result now follows since $R(0)=r>0$,
$R(t)$ is increasing on $(0,\tau_{1})\cup(\tau_{1},\tau_{2})$, $\lim_{t\rightarrow\tau_{1}^{+}}R(t)=-\infty$
and $\lim_{t\rightarrow\tau_{2}^{-}}R(t)=\infty$. If $\tau_{1}=\tau_{2}$,
then $P(\tau_{1})=P'(\tau_{1})=0$ and consequently 
\[
P(\tau_{1})R(\tau_{1})=rP(\tau_{1})-tP'(\tau_{1})+\frac{P(\tau_{1})Q'(\tau_{1})}{Q(\tau_{1})}=0.
\]
\end{proof}
Before we state our main result (c.f. Theorem \ref{thm:maintheorem}) we need to make one more definition.
\begin{defn} Given a polynomial $P(z)$, we denote by $\mathcal{Z}(P(z))$ the set of zeros of $P(z)$.
\end{defn}

\begin{thm}
\label{thm:maintheorem}Let $P,Q$ be polynomials as in \eqref{eq:prodreppolys}, and the functions $n_-^P, n_+^P, n_-^Q,n_+^Q$ be as in Definition \ref{def:zerocount}. Consider the sequence of polynomials $\left\{ H_{m}(z)\right\} _{m=0}^{\infty}$ generated by the relation 
\[
\tag{$\ddagger$} \qquad \sum_{m=0}^{\infty}H_{m}(z)t^{m}=\frac{1}{P(t)+zt^{r}Q(t)}=\frac{1}{D(t,z)},\qquad r\ge2.
\]
If
\begin{enumerate}
\item \label{cond1}$n_{+}^{P}(x)-n_{+}^{Q}(x)\ge2$, $\forall x\ge\tau_{2}$,
and $n_{+}^{Q}(x)=0$, $\forall x\in(0,\tau_{2}]$, 
\item \label{cond2}$n_{-}^{Q}(x)-n_{-}^{P}(x)\ge0$, $\forall x<0$, 
\item \label{cond3}$\Im R(t)>0$ on the sector $\{t|0<|t|<\tau_{2},0<\Arg t<\pi/r\}$, 
\item \label{cond4}$\Im R(t)>0$ on the semi-disk $\{t|0<|t|<t_{a},0<\Arg t<\pi\},$ 
\end{enumerate}
then the zeros of $H_{m}(z)$ are real and of the same sign $(-1)^{p_{+}-q_{+}}$ for all $m \gg 1$.
Moreover, ${\displaystyle \bigcup_{m\gg 1}^{\infty}\mathcal{Z}(H_{m})}$
is dense between $a={\displaystyle -\frac{P(t_{a})}{t_{a}^{r}Q(t_{a})}}$
and $(-1)^{p_{+}-q_{+}}\infty$. 
\end{thm}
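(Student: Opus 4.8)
The plan is to reduce the question about the zeros of $H_{m}(z)$ to a study of the zeros in $t$ of the denominator $D(t,z)=P(t)+zt^{r}Q(t)$, and to recover the asymptotic location of the zeros of $H_{m}$ from the dominant (smallest-modulus) zeros of $D(\cdot,z)$. For fixed real $z$, a partial-fraction/residue expansion of $1/D(t,z)$ writes $H_{m}(z)=-\sum_{j}D_{t}(t_{j}(z),z)^{-1}\,t_{j}(z)^{-m-1}$, the sum running over the (generically simple) zeros $t_{j}(z)$ of $D(\cdot,z)$, so for large $m$ the behaviour of $H_{m}(z)$ is controlled by the zero(s) of least modulus. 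Because $H_{m}(z)$ is real for real $z$, a real zero of $H_{m}$ can arise (for $m\gg1$) only through oscillatory cancellation, and this forces the two dominant zeros of $D(\cdot,z)$ to be a complex-conjugate pair $\tau e^{\pm i\theta}$ of equal modulus.

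This dictates the parameterization built in Section \ref{sec:tauz}. For each $\theta\in(0,\pi/r)$ I would look for $\tau=\tau(\theta)>0$ making $t=\tau e^{i\theta}$ a zero of $D(\cdot,z)$ for a \emph{real} value $z=z(\theta)$; equivalently, $-P(\tau e^{i\theta})/\left((\tau e^{i\theta})^{r}Q(\tau e^{i\theta})\right)$ must be real. A short computation shows that $t\,\frac{d}{dt}\log\!\left(-P(t)/(t^{r}Q(t))\right)=-R(t)$, so differentiating the argument of $-P/(t^{r}Q)$ along the ray $t=\tau e^{i\theta}$ gives $-\tau^{-1}\Im R(t)$; condition \eqref{cond3} ($\Im R>0$ on the sector) then makes this argument strictly monotone in $\tau$ and pins down $\tau(\theta)$, and with it the candidate real zero-locus $z(\theta)$, uniquely. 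Lemma \ref{lem:taexistence} supplies the boundary datum: as $\theta\to0^{+}$ the root $\tau e^{i\theta}$ tends to the smallest positive zero $t_{a}$ of $P(t)R(t)$, so that $z(\theta)\to a=-P(t_{a})/(t_{a}^{r}Q(t_{a}))$.

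Granting that $\tau(\theta)e^{\pm i\theta}$ is the dominant pair, the residue expansion yields $H_{m}(z(\theta))\sim 2|c(\theta)|\,\tau(\theta)^{-m-1}\cos\!\left((m+1)\theta+\phi(\theta)\right)$ for an amplitude and phase depending continuously on $\theta$. As $m\to\infty$ the sign changes of the cosine produce a zero of $H_{m}$ near every $\theta$ at which the phase passes an odd multiple of $\pi/2$, and these accumulate densely along the curve $z(\theta)$. I would then check that $z(\theta)$ is real-valued and, using \eqref{cond1} and \eqref{cond2} to track the sign of $P(t)/(t^{r}Q(t))$ through the positive- and negative-zero counts, that $z(\theta)$ sweeps the interval from $a$ out to $(-1)^{p_{+}-q_{+}}\infty$ as $\theta$ runs over $(0,\pi/r)$; the exponent $p_{+}-q_{+}$ is exactly the recorded sign. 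Finally, comparing the number of admissible phases at each $m$ with $\deg H_{m}$ confirms that \emph{all} zeros of $H_{m}$ are real, and of the stated sign, once $m\gg1$.

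The technical heart -- and the reason conditions \eqref{cond1}, \eqref{cond2}, and especially \eqref{cond4} are imposed -- is the dominance claim itself: that $\tau(\theta)e^{\pm i\theta}$ are the two zeros of $D(\cdot,z(\theta))$ of strictly least modulus, with no other zero inside or on the circle $|t|=\tau(\theta)$. I would establish this by an argument-principle/Rouch\'e computation, controlling the winding of $D(t,z(\theta))$ (equivalently the variation of $\arg R$, or of $\arg(P/t^{r}Q)$) around semicircular contours in the upper half-disk and invoking the real-coefficient symmetry. Condition \eqref{cond4} ($\Im R>0$ on the semi-disk $|t|<t_{a}$) excludes competing small-modulus zeros near the origin, while the interlacing-type inequalities \eqref{cond1}--\eqref{cond2} govern how the zeros of $P$ and $Q$ contribute to the winding. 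I expect this dominance/counting step, rather than the parameterization or the asymptotics, to be the main obstacle, and to be precisely where the three auxiliary lemmas announced for Section \ref{sec:completetheproof} do their work.
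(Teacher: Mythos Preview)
Your overall architecture---parameterize a complex-conjugate pair $\tau(\theta)e^{\pm i\theta}$ of zeros of $D(\cdot,z)$ for which $z=z(\theta)$ is real, show this pair dominates, and read off sign changes of $H_m(z(\theta))$ from the residue expansion---is exactly the paper's strategy. But you have misallocated the difficulty and, as a result, misidentified what the three auxiliary lemmas do.

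The dominance claim (that $\tau(\theta)e^{\pm i\theta}$ are the only zeros of $D(\cdot,z(\theta))$ in $\{|t|\le\tau(\theta)\}$) is \emph{not} the hard step: in the paper it is Lemma~\ref{lem:zerosoutsidedisk}, proved in Section~\ref{sec:tauz} by elementary monotonicity arguments (Lemmas~\ref{lem:ImR}, \ref{lem:nonzeroIm}) and case analysis, not by Rouch\'e. What is genuinely delicate---and what the three lemmas of Section~\ref{sec:threelemmas} actually address---is the \emph{counting of sign changes}, and specifically its failure to be uniform in $\theta$. Your heuristic $H_m(z(\theta))\sim 2|c(\theta)|\tau(\theta)^{-m-1}\cos((m+1)\theta+\phi(\theta))$ is only valid on compacta $[\gamma,\pi/r-\gamma]$; near the endpoints it breaks down. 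As $\theta\to 0$, the dominant pair collapses to $t_a$, and if $\tau_1$ has multiplicity $\rho$ then $\rho$ zeros of $D$ coalesce there; as $\theta\to\pi/r$, $\tau(\theta)\to 0$ and $r$ zeros of $D$ coalesce at the origin. In both regimes the two-term dominant asymptotic must be replaced by an exponential sum of the form $\sum_{k}\omega_k^\ell e^{x\omega_k}$ over roots of $-1$, and showing that this sum has the correct sign at the points $\theta_h$ requires a separate, nontrivial result (Proposition~\ref{prop:signexppoly}). Your proposal does not anticipate this endpoint degeneration.

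A second gap is the counting itself. You write ``comparing the number of admissible phases at each $m$ with $\deg H_m$,'' but the phase in the dominant term is not $(m+1)\theta$: it is the argument of $g(s)=P(s)R(s)s^{m}$ along the $\tau$-curve, which depends on $\theta$ nonlinearly. The paper does not count zeros of a cosine; instead it applies the argument principle to $g(t)-\xi$ on the closed loop $\gamma$ formed by the $\tau$-curve and its conjugate, locating $\sim m/r$ zeros of $g-\xi$ near the origin (plus one near $t_a$) to get a lower bound on the winding number, and hence on the number of sign changes of $g(s_h)$. The three lemmas then transfer these sign changes from $g$ to $H_m$ and pin down the endpoint signs, which is what forces the last zero of $H_m$ to lie on the correct side of $a$. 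Without this machinery your sketch would produce $\lfloor m/r\rfloor$ zeros only on compact sub\-intervals, leaving an $O(1)$ deficit that never closes.
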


\begin{cor}[to the proof of Lemma \ref{lem:taexistence}] \label{cor:separatedzeros} Consider the generating relation $(\ddagger)$. If the zeros of $P(t)$ are positive and those of $Q(t)$ are negative, then the zeros of $H_{m}(z)$ are real for all $m \gg 1$.
\end{cor}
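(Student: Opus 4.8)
The plan is to show that the sign separation of the zeros of $P$ and $Q$ forces all four hypotheses of Theorem~\ref{thm:maintheorem} to hold, after which the conclusion—reality of the zeros of $H_{m}(z)$ for $m\gg1$—follows at once by invoking that theorem. Write $p_{-}=0$ (all zeros of $P$ are positive) and $q_{+}=0$ (all zeros of $Q$ are negative), and assume $p_{+}\ge2$ so that $\tau_{2}$ is defined; note also that $P(0),Q(0)\ne0$ holds automatically, since no zero of either polynomial equals $0$.

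First I would dispose of the two counting conditions. Since $Q$ has no positive zeros, $n_{+}^{Q}(x)=0$ for every $x>0$; this establishes the second half of condition~\eqref{cond1} on $(0,\tau_{2}]$ for free, and reduces its first half to $n_{+}^{P}(x)\ge2$ for $x\ge\tau_{2}$, which holds because $\tau_{1},\tau_{2}$ lie in $(0,x]$ (counted with multiplicity even when $\tau_{1}=\tau_{2}$). Similarly, since $P$ has no negative zeros, $n_{-}^{P}(x)=0$ for all $x<0$, so condition~\eqref{cond2} reads $n_{-}^{Q}(x)\ge0$, which is automatic.

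The heart of the argument—and the reason this is a corollary to the \emph{proof} of Lemma~\ref{lem:taexistence} rather than to its statement—is the verification of conditions~\eqref{cond3} and~\eqref{cond4}. I would invoke the identity \eqref{eq:ImRexp} established there,
\[
\Im R(t)=\left(\sum_{-p_{-}<k\le p_{+}}\frac{\tau_{k}}{|t-\tau_{k}|^{2}}-\sum_{-q_{-}<j\le q_{+}}\frac{\gamma_{j}}{|t-\gamma_{j}|^{2}}\right)\Im t.
\]
Under the sign hypotheses every $\tau_{k}>0$ while every $\gamma_{j}<0$, so each term of the first sum is positive and each term of the second contributes $-\gamma_{j}/|t-\gamma_{j}|^{2}>0$; hence the parenthesized factor is strictly positive wherever it is defined. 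Because $\Im t>0$ on the entire open upper half-plane, we obtain $\Im R(t)>0$ throughout $\{t:\Im t>0\}$ away from the poles. This single estimate covers both the sector in condition~\eqref{cond3} and the semi-disk in condition~\eqref{cond4} simultaneously. Observe that, having already verified conditions~\eqref{cond1} and~\eqref{cond3}, hypotheses~(i) and~(ii) of Lemma~\ref{lem:taexistence} are in force, so $t_{a}$ exists and the domain named in~\eqref{cond4} is well-defined.

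With all four hypotheses in hand, Theorem~\ref{thm:maintheorem} yields the reality of the zeros of $H_{m}(z)$ for all $m\gg1$, which is the assertion of the corollary. I do not expect a genuine obstacle here: no new estimate beyond \eqref{eq:ImRexp} is required, and the content of the statement is precisely that separating the signs of the constituents' zeros makes the imaginary-part condition hold on the \emph{largest possible} region (the full upper half-plane) rather than merely the sector or semi-disk demanded by the theorem. The only subtlety I would flag is the degenerate low-degree case: if $\deg P\le1$ then $\tau_{2}$ fails to exist, and the statement must be interpreted—or excluded—accordingly, so the standing assumption $p_{+}\ge2$ should be recorded.
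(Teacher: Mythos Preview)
Your proposal is correct and follows essentially the same approach as the paper: verify that the sign-separation hypotheses force conditions \eqref{cond1}--\eqref{cond4} of Theorem~\ref{thm:maintheorem}, invoking the identity~\eqref{eq:ImRexp} from the proof of Lemma~\ref{lem:taexistence} for the imaginary-part conditions. The paper's proof is simply a terse two-sentence version of exactly what you wrote out; your additional remark about the degenerate case $p_{+}\le1$ is a reasonable caveat not made explicit in the paper.
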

\begin{proof} It is straightforward that under the assumption of the corollary, conditions (1) and (2) of Theorem \ref{thm:maintheorem} are satisfied. In addition, equation (\ref{eq:ImRexp}) guarantees that conditions (3) and (4) are also satisfied. The result follows. 
\end{proof}
\begin{ex} If $P(t)=(t+2)(t-1)(t-2)(t-4)$, $Q(t)=(t+1)(t-3)(t-5)$ and $r=3$, then $t_a \approx 1.3$, and conditions $(3)$ and $(4)$ are satisfied, as illustrated in Figure \ref{fig:example}. 
\begin{figure}[htbp]
\begin{center}
\includegraphics[scale=0.6]{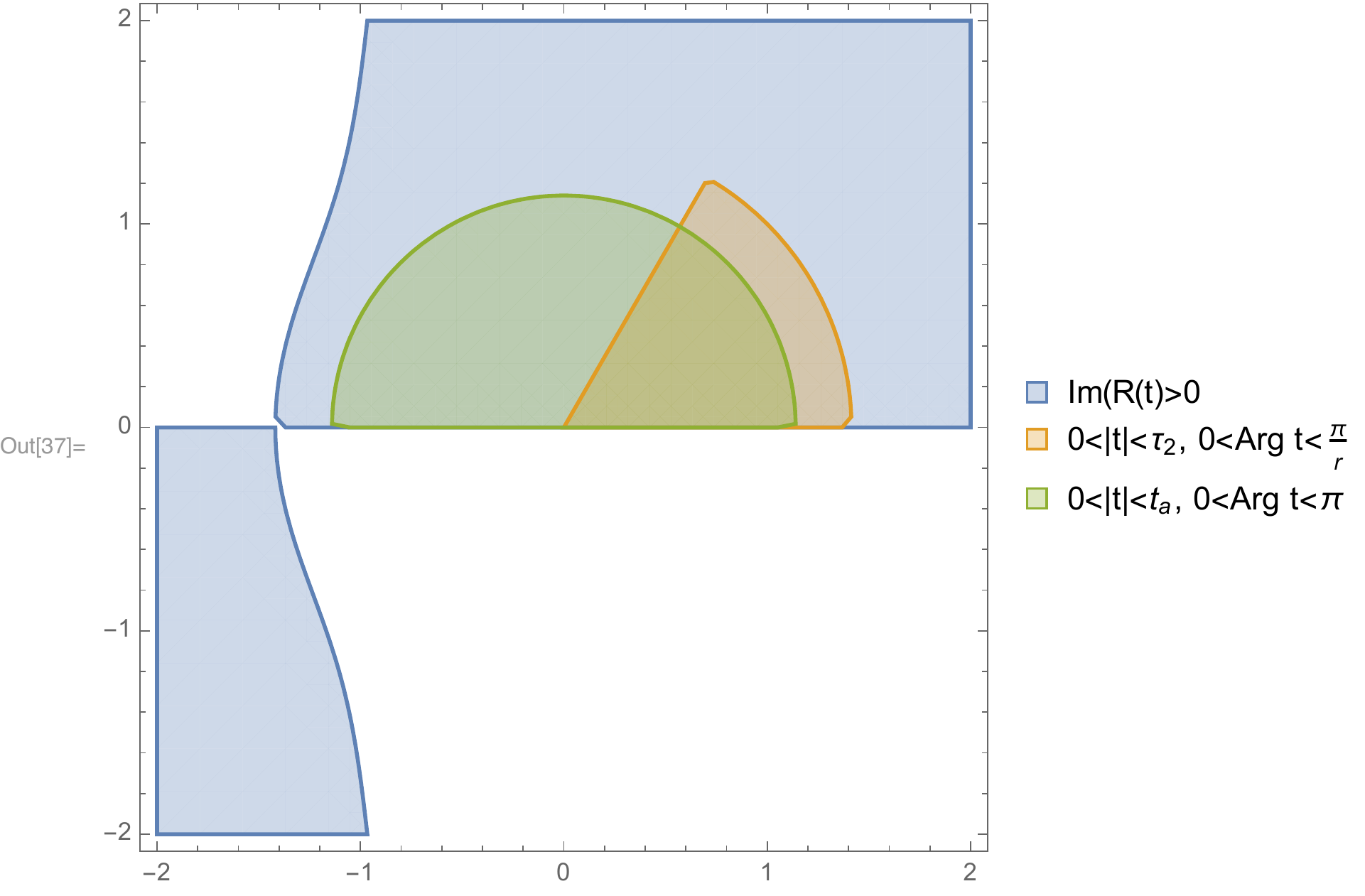}
\caption{The relevant regions for the applicability of Theorem \ref{thm:maintheorem} for the choices $P(t)=(t+2)(t-1)(t-2)(t-4)$ and $Q(t)=(t+1)(t-3)(t-5)$}
\label{fig:example}
\end{center}
\end{figure}
We thus conclude that the polynomials generated by $\displaystyle{\frac{1}{P(t)+zt^3Q(t)}}$ are all hyperbolic for $m \gg 1$.
\end{ex}
\begin{rem} \label{rem:negzeroP} The following observations are immediate:
\begin{itemize}
\item[(i)] As $t\to\tau_{0}$ in the upper half plane,
the right hand side of \eqref{eq:ImRexp} eventually turns negative.
It follows from condition \eqref{cond4} in Theorem \ref{thm:maintheorem}
that $|\tau_{0}|>t_{a}$. 
\item[(ii)] The conclusion of Theorem \ref{thm:maintheorem} is false if we allow the zeros of $P$ and $Q$ to interlace. For example, with $P(t)=(t-1)(t-3)(t-5)$ and $Q(t)=(t-2)(t-4)$ and $r=3$ we see that $H_{16}(z)$ has a non-real root $z=-0.58844...+i \cdot 0.106817... $.
\item[(iii)] $\deg H_m(z) \leq \lfloor m/r \rfloor$ for all $m \geq 0$. This is most readily deduced from the equations
\[
\left(P(\Delta)+z\Delta^rQ(\Delta)\right)[H_m(z)]=\left\{\begin{array}{cc} 1 & m=0 \\ 0 & m\geq 1\end{array} \right.
\]
where $\Delta[H_m(z)]=H_{m-1}(z)$, and $H_{-k}(z) \equiv 0$ for $k \in \mathbb{N}$.
\item[(iv)] With the substitution $t$ by $-t$ we see that the zeros of $H_{m}(z)$
are still real if the zeros of $P(t)$ are negative and $Q(t)$ are
positive. 
\end{itemize}
\end{rem}

\section{The functions $\tau(\theta)$ and $z(\theta)$} \label{sec:tauz}

For each $t=\tau e^{i\theta}$, $0<\theta<\pi$, we define the angles
$0<\theta_{k}(t),\eta_{j}(t)<\pi$ implicitly by 
\begin{eqnarray}
\frac{\tau e^{i\theta}-\tau_{k}}{\tau e^{-i\theta}-\tau_{k}} & = & e^{2i\theta_{k}(t)}\qquad(-p_{-}<k\leq p_{+})\label{eq:thetadef}\\
\frac{\tau e^{i\theta}-\gamma_{j}}{\tau e^{-i\theta}-\gamma_{j}} & = & e^{2i\eta_{j}(t)}\qquad(-q_{-}<j\leq q_{+}).\label{eq:etadef}
\end{eqnarray}
From these equations we obtain 
\begin{equation}
\tau=\tau_{k}\frac{\sin\theta_{k}(t)}{\sin(\theta_{k}(t)-\theta)}=\gamma_{j}\frac{\sin\eta_{j}(t)}{\sin(\eta_{j}(t)-\theta)}\qquad(-p_{-}<k\leq p_{+},\quad-q_{-}<j\leq q_{+}).\label{eq:taudef}
\end{equation}
\begin{figure}
\begin{centering}
\includegraphics[scale=0.7]{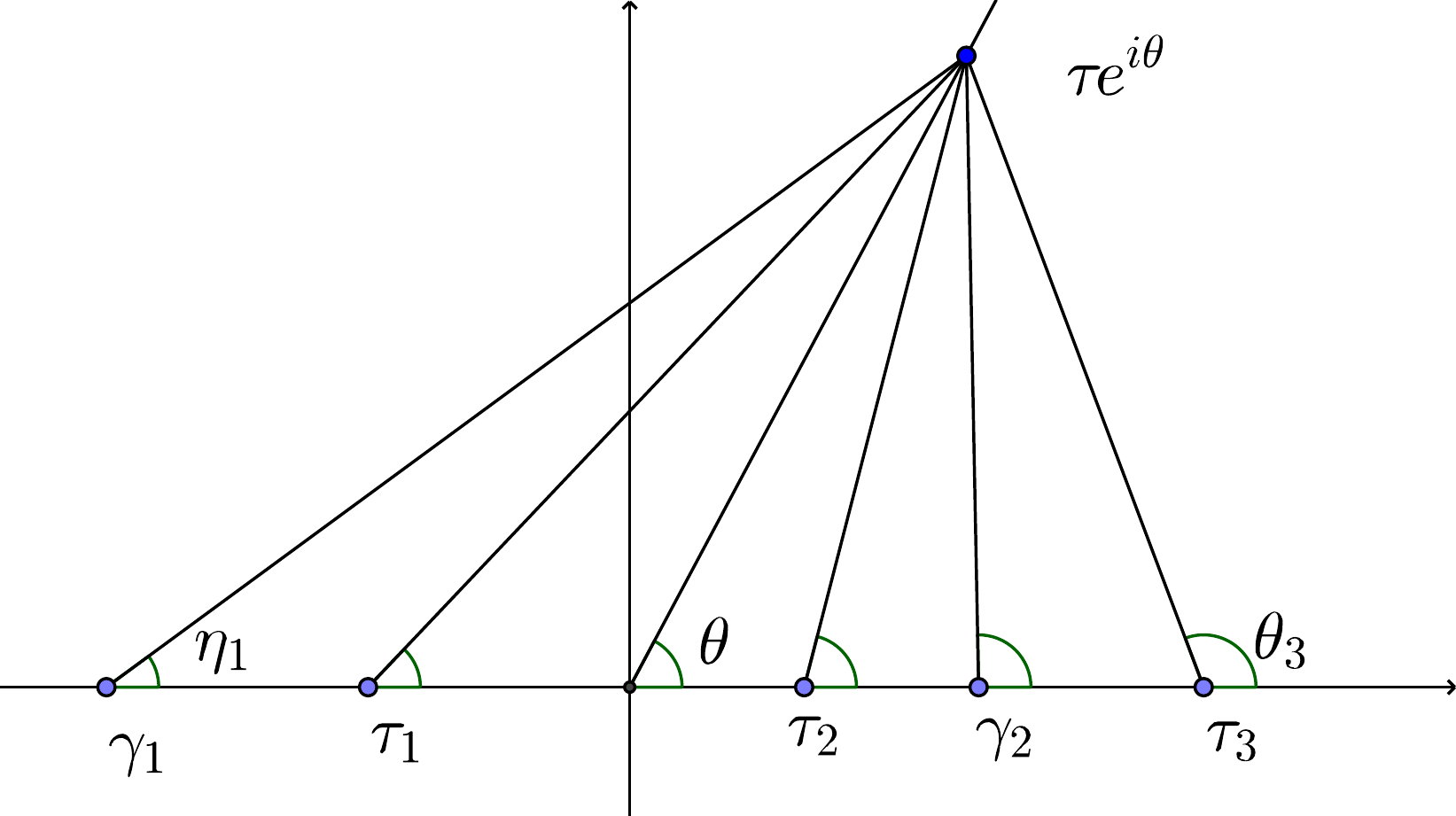} 
\par\end{centering}
\caption{\label{fig:angledef}The angles $\theta_{k}(t)$ and $\eta_{k}(t)$}
\end{figure}

Let $\Log(t)$ denote the principal branch of the logarithm. Then
the function 
\begin{equation}
f(t)=r\Log t+\sum_{-q_{-}<j\leq q_{+}}\Log(t-\gamma_{j})-\sum_{-p_{-}<k\leq p_{+}}\Log(t-\tau_{k})\label{eq:logfunc}
\end{equation}
is analytic on the region $\Im t>0$, and hence $f$ satisfies the
Cauchy-Riemann equations there: 
\begin{align}
\tau\frac{\partial\Re f}{\partial\tau} & =\frac{\partial\Im f}{\partial\theta}=\Re R(t),\nonumber \\
\frac{\partial\Re f}{\partial\theta} & =-\tau\frac{\partial\Im f}{\partial\tau}=-\Im R(t).\label{eq:CauchyRiemann}
\end{align}
On the other hand, 
\begin{align*}
\sum_{-p_{-}<k\le p_{+}}\theta_{k}(t)-\sum_{-q_{-}<k\le q_{+}}\eta_{k}(t)-r\theta & =-\Im f, \qquad \textrm{and}\\
\ln\left|\frac{t^{r}Q(t)}{P(t)}\right| & =\Re f.
\end{align*}
We thus arrive at the following lemmas. 
\begin{lem}
\label{lem:ImR}Suppose $t=\tau e^{i\theta}$, $0<\theta<\pi$. The following statements are equivalent 
\begin{enumerate}
\item $\Im R(t)>0$. 
\item For any fixed $\theta$, the function 
\[
\sum_{-p_{-}<k\le p_{+}}\theta_{k}(t)-\sum_{-q_{-}<j\le q_{+}}\eta_{j}(t)
\]
is strictly decreasing in $\tau$. 
\item For any fixed $\tau$, the function 
\[
\left|\frac{t^{r}Q(t)}{P(t)}\right|
\]
is strictly decreasing in $\theta\in(0,\pi)$. 
\end{enumerate}
\end{lem}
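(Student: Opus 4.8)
The plan is to read off all three equivalences directly from the two displayed identities immediately preceding the lemma, combined with the Cauchy--Riemann relations in \eqref{eq:CauchyRiemann}. Write $S(\tau,\theta)=\sum_{k}\theta_{k}(t)-\sum_{j}\eta_{j}(t)$ and $T(\tau,\theta)=|t^{r}Q(t)/P(t)|$, so that those identities read $S=r\theta-\Im f$ and $\ln T=\Re f$. Everything then reduces to differentiating these two relations and tracking signs, since $f$ is analytic on all of $\Im t>0$ and hence \eqref{eq:CauchyRiemann} is available at every point under consideration.

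For the equivalence of (1) and (2), I would fix $\theta$ and differentiate $S=r\theta-\Im f$ with respect to $\tau$, obtaining $\partial S/\partial\tau=-\partial\Im f/\partial\tau$. The second line of \eqref{eq:CauchyRiemann} gives $-\tau\,\partial\Im f/\partial\tau=-\Im R(t)$, hence $\partial S/\partial\tau=-\Im R(t)/\tau$. Since $\tau>0$, this derivative is negative precisely when $\Im R(t)>0$, which is the infinitesimal form of the assertion that $S$ is strictly decreasing in $\tau$. For the equivalence of (1) and (3), I would instead fix $\tau$ and differentiate $\ln T=\Re f$ with respect to $\theta$, so that $(1/T)\,\partial T/\partial\theta=\partial\Re f/\partial\theta=-\Im R(t)$ by the same Cauchy--Riemann system. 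As $T>0$, this forces $\partial T/\partial\theta<0$ exactly when $\Im R(t)>0$, yielding the strict monotonicity of $T$ in $\theta$.

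The argument is essentially pure sign bookkeeping, so I do not anticipate a serious analytic obstacle; the only point requiring care is the passage between the pointwise sign of $\Im R(t)$ and the global monotonicity phrased in statements (2) and (3). I would handle this by reading the equivalences pointwise in $t$: the sign of the relevant partial derivative at $t=\tau e^{i\theta}$ coincides with the sign of $\Im R(t)$, and strict monotonicity along a $\tau$- or $\theta$-interval then follows by integrating a derivative that is strictly negative throughout that interval. I would also make explicit that the analyticity of $f$ on the entire upper half-plane—established in the sentence introducing \eqref{eq:logfunc}—is what legitimizes invoking \eqref{eq:CauchyRiemann} at each such $t$, closing the chain of equivalences.
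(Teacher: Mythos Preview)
Your proposal is correct and follows precisely the approach intended by the paper: the lemma is presented there as an immediate consequence of the Cauchy--Riemann relations \eqref{eq:CauchyRiemann} together with the identities $S-r\theta=-\Im f$ and $\ln T=\Re f$, and your argument simply spells out that differentiation and sign bookkeeping explicitly. Your additional remark on reading the equivalences pointwise is a welcome clarification of what the paper leaves implicit.
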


\begin{lem}
\label{lem:ReR}Suppose $t=\tau e^{i\theta}$, $0<\theta<\pi$. The following statements are equivalent 
\begin{enumerate}
\item $\Re R(t)>0$. 
\item For any fixed $\tau$, the function 
\[
\sum_{-p_{-}<k\le p_{+}}\theta_{k}(t)-\sum_{-q_{-}<k\le q_{+}}\eta_{k}(t)-r\theta
\]
is strictly decreasing in $\theta$ . 
\item For any fixed $\theta$, the function 
\[
\left|\frac{t^{r}Q(t)}{P(t)}\right|
\]
is strictly decreasing in $\tau$. 
\end{enumerate}
\end{lem}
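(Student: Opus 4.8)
The plan is to obtain both equivalences as immediate consequences of the two identities displayed just before the statement, namely
\[
\sum_{-p_{-}<k\le p_{+}}\theta_{k}(t)-\sum_{-q_{-}<k\le q_{+}}\eta_{k}(t)-r\theta=-\Im f \quad\text{and}\quad \ln\left|\frac{t^{r}Q(t)}{P(t)}\right|=\Re f,
\]
together with the Cauchy--Riemann relations \eqref{eq:CauchyRiemann}. The argument runs exactly parallel to the proof of Lemma \ref{lem:ImR}, the only change being that here the governing relation is the \emph{first} line of \eqref{eq:CauchyRiemann}, $\Re R(t)=\partial_{\theta}(\Im f)=\tau\,\partial_{\tau}(\Re f)$, whereas for Lemma \ref{lem:ImR} one uses the second line.

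For the equivalence of (1) and (2), I would note that the function appearing in (2) is exactly $-\Im f$; hence for fixed $\tau$ it is strictly decreasing in $\theta$ precisely when $\partial_{\theta}(\Im f)>0$, and by the first Cauchy--Riemann relation this quantity equals $\Re R(t)$. For the equivalence of (1) and (3), I would write the function in (3) as $|t^{r}Q(t)/P(t)|=e^{\Re f}$; since $x\mapsto e^{x}$ is strictly increasing and $\tau>0$, its monotonicity in $\tau$ (for fixed $\theta$) is controlled by the sign of $\partial_{\tau}(\Re f)=\Re R(t)/\tau$, hence by the sign of $\Re R(t)$. Combining the two chains of equivalences yields the lemma.

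I do not expect any genuine obstacle, since all the analytic work has been front-loaded: the analyticity of $f$ on the upper half-plane, the Cauchy--Riemann equations, and the two identities expressing $\Re f$ and $\Im f$ through the angle sum and the modulus $|t^{r}Q/P|$ are already in hand. The only point demanding care is the sign bookkeeping --- tracking how ``strictly decreasing'' or ``strictly increasing'' in each variable corresponds to the sign of the relevant partial derivative, and verifying through \eqref{eq:CauchyRiemann} that each condition collapses to $\Re R(t)>0$. In particular, because $\Re R(t)=\tau\,\partial_{\tau}(\Re f)$ with $\tau>0$, the monotonicity of $|t^{r}Q/P|$ in $\tau$ in statement (3) is the one dictated by this relation.
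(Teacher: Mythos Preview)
Your approach is exactly the paper's: the lemma is stated immediately after the Cauchy--Riemann relations \eqref{eq:CauchyRiemann} and the two identities for $\Re f$ and $\Im f$, with no separate proof, so the intended argument is precisely the one you outline.

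One point of sign bookkeeping you flag but do not resolve: from $\Re R(t)=\tau\,\partial_{\tau}(\Re f)$ with $\tau>0$ and $\Re f=\ln\bigl|t^{r}Q(t)/P(t)\bigr|$, the condition $\Re R(t)>0$ is equivalent to $\bigl|t^{r}Q(t)/P(t)\bigr|$ being strictly \emph{increasing} in $\tau$, not decreasing as printed in item (3). Your closing sentence (``the monotonicity \ldots\ is the one dictated by this relation'') hedges around this; you should state outright that (3) as written carries a typo and that the correct direction is ``increasing''. The equivalence $(1)\Leftrightarrow(2)$ is fine as you have it.
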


We now consider the special $t=\tau e^{i\theta}$ for which the following
equality holds: 
\begin{equation}
\sum_{-p_{-}<k\le p_{+}}\theta_{k}(t)-\sum_{-q_{-}<j\le q_{+}}\eta_{j}(t)-r\theta=(p_{+}-q_{+}-1)\pi.\label{eq:sumangles}
\end{equation}
\begin{lem}
\label{lem:tauupperbound}If $t=\tau e^{i\theta}$, $0<\theta<\pi/r$,
and \eqref{eq:sumangles} holds, then $\tau<\tau_{2}$. 
\end{lem}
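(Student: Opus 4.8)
The plan is to show directly that \eqref{eq:sumangles} cannot hold when $\tau\ge\tau_{2}$: for every such $\tau$ and every $\theta\in(0,\pi/r)$ I will prove that the left-hand side of \eqref{eq:sumangles} is \emph{strictly less} than $(p_{+}-q_{+}-1)\pi$, which forces any solution of \eqref{eq:sumangles} to have $\tau<\tau_{2}$. Writing $\Phi(\tau):=\sum_{k}\theta_{k}(t)-\sum_{j}\eta_{j}(t)-r\theta$ with $t=\tau e^{i\theta}$, note first that since $t$ never meets the real axis for $\theta\in(0,\pi)$, each $\theta_{k}(t)=\Arg(t-\tau_{k})$ and $\eta_{j}(t)=\Arg(t-\gamma_{j})$ is a smooth function of $\tau$, so $\Phi$ is continuous on $(0,\infty)$ and it suffices to bound it pointwise.

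Next I would record the elementary geometry of the angles $\Arg(t-a)$, $a\in\mathbb{R}$. For fixed $t$ in the upper half-plane, $\Arg(t-a)$ is strictly increasing in $a$ (its $a$-derivative is $\Im t/|t-a|^{2}>0$). For a positive zero $a>0$ the angle $\Arg(\tau e^{i\theta}-a)$ is strictly decreasing in $\tau$ (its $\tau$-derivative is $-a\sin\theta/|t-a|^{2}<0$) and always lies in $(\theta,\pi)$. Finally a direct computation gives $\theta_{2}(\tau_{2}e^{i\theta})=\Arg\!\big(\tau_{2}(e^{i\theta}-1)\big)=\pi/2+\theta/2$, and since $\tau_{1}\le\tau_{2}$ the monotonicity in $a$ yields $\theta_{1}(\tau_{2}e^{i\theta})\le\pi/2+\theta/2$ as well.

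The heart of the argument is to exploit hypotheses (1)--(2) to cancel, in pairs, most of the angle contributions. On the positive side, condition (1) gives $\gamma_{i}>\tau_{2}$ for every positive zero $\gamma_{i}$ of $Q$ and, since $n_{+}^{P}(\gamma_{i})\ge n_{+}^{Q}(\gamma_{i})+2\ge i+2$, also $\tau_{i+2}\le\gamma_{i}$. Pairing $\gamma_{i}$ with $\tau_{i+2}$ for $i=1,\dots,q_{+}$ and using the monotonicity in $a$ makes each difference $\theta_{i+2}-\eta_{i}\le0$; this leaves the two smallest positive zeros $\tau_{1},\tau_{2}$ and the $p_{+}-q_{+}-2$ largest ones unpaired, the latter each contributing at most $\pi$. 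On the negative side, condition (2) lets me pair the $\ell$-th negative zero $\tau_{-(\ell-1)}$ of $P$ with the $\ell$-th negative zero $\gamma_{-(\ell-1)}$ of $Q$, where $\tau_{-(\ell-1)}\le\gamma_{-(\ell-1)}<0$, again forcing each paired difference $\le0$; the remaining more negative zeros of $Q$ only subtract the positive quantities $\eta_{j}$. Hence the entire negative block contributes $\le0$, and altogether
\[
\Phi(\tau)\le \theta_{1}(t)+\theta_{2}(t)+(p_{+}-q_{+}-2)\pi-r\theta .
\]

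To close, I would use that for $\tau\ge\tau_{2}$ the decreasing-in-$\tau$ property of the positive-zero angles gives $\theta_{1}(t)+\theta_{2}(t)\le\theta_{1}(\tau_{2}e^{i\theta})+\theta_{2}(\tau_{2}e^{i\theta})\le\pi+\theta$, whence $\Phi(\tau)\le(p_{+}-q_{+}-1)\pi-(r-1)\theta<(p_{+}-q_{+}-1)\pi$ because $r\ge2$ and $\theta>0$. Thus \eqref{eq:sumangles} fails for all $\tau\ge\tau_{2}$, proving $\tau<\tau_{2}$. For completeness one checks the solution does exist below $\tau_{2}$: as $\tau\to0^{+}$ one has $\Phi\to(p_{+}-q_{+})\pi-r\theta>(p_{+}-q_{+}-1)\pi$ since $r\theta<\pi$, and by condition (3) with Lemma \ref{lem:ImR} the map $\tau\mapsto\Phi(\tau)$ is strictly decreasing on $(0,\tau_{2})$, so it passes through $(p_{+}-q_{+}-1)\pi$ exactly once there. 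I expect the main obstacle to be arranging the pairing so that the constants combine to \emph{exactly} $(p_{+}-q_{+}-1)\pi$: it is the interlacing built into (1)--(2) that annihilates the cross terms, while the precise value $\pi/2+\theta/2$ contributed by the two bottom zeros $\tau_{1},\tau_{2}$ at $\tau=\tau_{2}$ supplies the extra $\pi$ needed to reach the threshold, with $(r-1)\theta$ providing the strictness.
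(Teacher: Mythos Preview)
Your argument is correct and follows essentially the same pairing strategy as the paper: both proofs use conditions (1) and (2) to cancel most of the $\theta_k$'s against the $\eta_j$'s, reducing the estimate to $\theta_1+\theta_2$ plus $(p_+-q_+-2)\pi$. The only genuine difference is the endgame. The paper deduces $(\theta_1-\theta)+(\theta_2-\theta)\ge\pi$, hence $\theta_2-\theta>\pi/2$, and then invokes the triangle $\triangle O\,\tau_2\,t$ (largest angle opposite longest side) to force $\tau<\tau_2$. You instead work contrapositively: for $\tau\ge\tau_2$ you use the monotonicity of $\Arg(\tau e^{i\theta}-a)$ in $\tau$ together with the exact evaluation $\theta_2(\tau_2 e^{i\theta})=\pi/2+\theta/2$ to bound $\theta_1+\theta_2\le\pi+\theta$, and then the spare $(r-1)\theta>0$ gives the strict inequality. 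Your route is marginally more self-contained (no appeal to triangle geometry), while the paper's is slightly more pictorial; neither requires anything the other doesn't. The existence/uniqueness paragraph you append is extraneous to this lemma---the paper handles that separately in Lemma~\ref{lem:tauexistence}.
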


\begin{proof}
Since $n_{-}^{Q}(x)\le n_{-}^{P}(x)$, $\forall x<0$, we deduce that
\[
\sum_{-p_{-}<k\le0}\theta_{k}-\sum_{-q_{-}<j\le0}\eta_{j}\le0,
\]
from which \eqref{eq:sumangles} implies 
\begin{equation}
(p_{+}-q_{+}-1)\pi\le\sum_{k=1}^{p_{+}}\theta_{k}-\sum_{j=1}^{q_{+}}\eta_{j}-r\theta.\label{eq:diffplusangles}
\end{equation}
The inequality $n_{+}^{P}(x)-n_{+}^{Q}(x)\ge2$ implies the existence of $p_{+}-q_{+}$ many
angles $\theta<\theta_{k}<\pi$, two of which are $\theta_{1}$ and $\theta_{2}$, whose sum is at least $\sum_{k=1}^{p_{+}}\theta_{k}-\sum_{j=1}^{q_{+}}\eta_{j}$. We thus see that 
\[
(\theta_{1}-\theta)+(\theta_{2}-\theta)\ge(p_{+}-q_{+}-1)\pi+(r-2)\theta-(p_{+}-q_{+}-2)\pi\ge\pi.
\]
Consequently, $\theta_{2}-\theta>\pi/2$. The claim $\tau_2>\tau$ now follows by noting that in the triangle $\triangle O\tau_2 \tau e^{i\theta}$ in Figure \ref{fig:angledef} the angle at $\tau e^{i\theta}$ (namely $\theta_2-\theta$) is the largest, and hence the side opposite this vertex is the longest. 
\end{proof}
\begin{lem}
\label{lem:tauexistence} Let $n$ and $s$ denote the total number of zeros of $P$ and $Q$ respectively. For each $\theta\in(0,\pi/r)$, there exists
a unique $t=\tau e^{i\theta}$ for which \eqref{eq:sumangles} holds. 
\end{lem}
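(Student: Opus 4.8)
The plan is to fix $\theta\in(0,\pi/r)$, restrict attention to the ray $t=\tau e^{i\theta}$, $\tau>0$, and study the single real function
\[
\Phi(\tau)=\sum_{-p_-<k\le p_+}\theta_k(t)-\sum_{-q_-<j\le q_+}\eta_j(t)-r\theta-(p_+-q_+-1)\pi,
\]
so that \eqref{eq:sumangles} holds exactly when $\Phi(\tau)=0$. Since $\theta\in(0,\pi)$ is fixed, the point $t$ stays strictly in the upper half-plane, so each $\theta_k(t)=\Arg(t-\tau_k)$ and $\eta_j(t)=\Arg(t-\gamma_j)$ stays in $(0,\pi)$ and varies continuously with $\tau$; hence $\Phi$ is continuous on $(0,\infty)$. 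The proof then splits into an existence half (a global sign change) and a uniqueness half (local monotonicity plus the confinement already furnished by Lemma \ref{lem:tauupperbound}).

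First I would record the two boundary limits. As $\tau\to0^{+}$ we have $t\to0$, so $t-\tau_k\to-\tau_k$; thus $\theta_k\to\pi$ for each of the $p_+$ positive zeros and $\theta_k\to0$ for each negative zero, and likewise for the $\eta_j$. This gives $\Phi(0^{+})=(p_+-q_+)\pi-r\theta-(p_+-q_+-1)\pi=\pi-r\theta>0$, the positivity coming from $0<r\theta<\pi$. As $\tau\to\infty$, every $t-\tau_k$ and $t-\gamma_j$ has argument tending to $\theta$, so with $n=p_++p_-$ and $s=q_++q_-$ one gets $\Phi(\infty)=(n-s-r)\theta-(p_+-q_+-1)\pi$.

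Existence then follows from the intermediate value theorem once I show $\Phi(\infty)<0$. This is where conditions (1) and (2) of Theorem \ref{thm:maintheorem} enter: condition (1) forces $p_+-q_+\ge2$, while condition (2) gives $p_-\le q_-$, whence $n-s=(p_+-q_+)+(p_--q_-)\le p_+-q_+$. A short case split on the sign of $n-s-r$, using $r\ge2$ and $\theta<\pi/r$, then yields $(n-s-r)\theta<(p_+-q_+-1)\pi$, i.e.\ $\Phi(\infty)<0$. Since $\Phi$ changes sign on $(0,\infty)$, it has at least one zero.

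For uniqueness I would combine monotonicity with confinement. On $0<\tau<\tau_2$ the point $t$ lies in the sector of condition (3), so $\Im R(t)>0$ there, and Lemma \ref{lem:ImR} makes $\sum\theta_k-\sum\eta_j$, hence $\Phi$, strictly decreasing on $(0,\tau_2)$; thus $\Phi$ has at most one zero in that interval. On the other hand, Lemma \ref{lem:tauupperbound} guarantees that any $\tau$ solving \eqref{eq:sumangles} satisfies $\tau<\tau_2$, so \emph{all} zeros of $\Phi$ already lie in $(0,\tau_2)$; together these give exactly one zero. The main subtlety to flag is precisely this interplay: condition (3) supplies monotonicity only on $(0,\tau_2)$, so it cannot by itself produce a root, and the existence must instead come from the global sign change between $\tau=0^{+}$ and $\tau=\infty$ — it is Lemma \ref{lem:tauupperbound} that reconciles the global existence argument with the merely local monotonicity. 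The one genuinely computational point is the verification of $\Phi(\infty)<0$, in which the hypotheses $p_+-q_+\ge2$, $p_-\le q_-$, $r\ge2$, and $\theta<\pi/r$ must all be used together.
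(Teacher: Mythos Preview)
Your proof is correct and follows essentially the same approach as the paper: compute the limits of the angle sum at $\tau\to0^{+}$ and $\tau\to\infty$, invoke the intermediate value theorem for existence, and then combine Lemma~\ref{lem:ImR} (monotonicity on $(0,\tau_2)$ from condition~(3)) with Lemma~\ref{lem:tauupperbound} (confinement of any solution to $(0,\tau_2)$) for uniqueness. The only cosmetic difference is in verifying $(n-s-r)\theta<(p_{+}-q_{+}-1)\pi$: the paper uses the bound $n-s\le 2(p_{+}-q_{+}-1)$, whereas you use the slightly sharper $n-s\le p_{+}-q_{+}$ together with an explicit case split on the sign of $n-s-r$; both arrive at the same conclusion.
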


\begin{proof} 
We first have the inequalities 
\[
p_{+}-q_{+}-1=n_{+}^{P}(\infty)-n_{+}^{Q}(\infty)-1>0
\]
and 
\[
p_{+}-q_{+}-1\ge\frac{n-s}{2}
\]
since the second inequality is equivalent to 
\[
p_{+}-q_{+}-2\ge p_{-}-q_{-}.
\]
Next, we observe that 
\[
\sum_{-p_{-}<k\le p_{+}}\theta_{k}(t)-\sum_{-q_{-}<k\le q_{+}}\eta_{k}(t)-r\theta
\]
approaches $(p_{+}-q_{+})\pi-r\theta$ as $|t|\rightarrow0$, and
$(n-s-r)\theta$ as $|t|\rightarrow\infty$ where 
\[
(n-s-r)\theta<\frac{(n-s-r)\pi}{r}<(p_{+}-q_{+}-1)\pi<(p_{+}-q_{+})\pi-r\theta.
\]
By the intermediate value theorem, there is a $\tau\in(0,\infty)$
so that \eqref{eq:sumangles} holds. The uniqueness of $t=\tau e^{i\theta}$
comes directly from Lemmas \ref{lem:ImR} and \ref{lem:tauupperbound}. 
\end{proof}
Thus for each $\theta\in(0,\pi/r)$, we can define the functions $\tau(\theta)$,
$\theta_{k}(\theta)$, $-p_{-} < k\le p_{+}$, and $\eta_{j}(\theta)$, $-q_{-} < j\le q_{+}$,
according to \eqref{eq:thetadef}, \eqref{eq:etadef}, \eqref{eq:taudef},
and \eqref{eq:sumangles}. To ensure these functions are analytic,
we need to make use of the complex version of the Implicit Function
Theorem. 
\begin{thm}[Theorem 2.1.2,\, p.24 \cite{hormander}]
\label{IFT} Let $f_{j}(w,z),$ $j=1,\ldots,m$, be analytic functions
of $(w,z)=(w_{1},\ldots,w_{m},z_{1},\ldots,z_{n})$ in a neighborhood
of a point $(w^{*},z^{*})$ in $\mathbb{C}^{m}\times\mathbb{C}^{n}$,
and assume that $f_{j}(w^{*},z^{*})=0$, $j=1,\ldots,m$, and that
\[
\det\left(\frac{\partial f_{j}}{\partial w_{k}}\right)_{j,k=1}^{m}\neq0\qquad\textrm{at}\quad(w^{*},z^{*}).
\]
Then the equations $f_{j}(w,z)=0$, $j=1,\ldots,m$ have a uniquely
determined analytic solution $w(z)$ in a neighborhood of $z^{*}$,
such that $w(z^{*})=w^{*}$. 
\end{thm}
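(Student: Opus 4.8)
The plan is to deduce this complex statement from the classical real implicit function theorem, reserving the genuinely complex-analytic content for a final regularity upgrade. First I would pass to real coordinates: writing $w_k=u_k+iv_k$, $z_l=x_l+iy_l$, and decomposing each $f_j=g_j+ih_j$ into its real and imaginary parts, the complex system $f_j(w,z)=0$, $j=1,\dots,m$, becomes a system of $2m$ real-valued $C^{\infty}$ equations $g_j=h_j=0$ in the $2m$ real unknowns $(u_k,v_k)$ and the $2n$ real parameters $(x_l,y_l)$, vanishing at the point corresponding to $(w^*,z^*)$.

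The decisive bookkeeping step is to convert the complex nondegeneracy hypothesis into the hypothesis of the real theorem. Recall that a $\mathbb{C}$-linear endomorphism $A$ of $\mathbb{C}^m$, regarded as an $\mathbb{R}$-linear endomorphism of $\mathbb{R}^{2m}$, satisfies $\det_{\mathbb{R}}A=|\det_{\mathbb{C}}A|^{2}$. Applying this with $A=(\partial f_j/\partial w_k)$ and invoking the Cauchy--Riemann equations that each $f_j$ satisfies in the $w$-variables, the $2m\times 2m$ real Jacobian of $(g_j,h_j)$ with respect to $(u_k,v_k)$ has determinant $|\det(\partial f_j/\partial w_k)|^{2}$, which is nonzero at $(w^*,z^*)$ by assumption. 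The real implicit function theorem then furnishes a unique $C^1$ (in fact $C^{\infty}$) solution $w(z)=(w_1(z),\dots,w_m(z))$ on a neighborhood of $z^*$ with $w(z^*)=w^*$ and $f_j(w(z),z)\equiv 0$.

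It remains to show that this solution is holomorphic, which is where the hypotheses on the $f_j$ are used a second, essential time. Differentiating the identity $f_j(w(z),z)\equiv0$ with respect to $\bar z_l$ via the Wirtinger chain rule gives
\[
0=\sum_{k=1}^{m}\frac{\partial f_j}{\partial w_k}\frac{\partial w_k}{\partial\bar z_l}+\sum_{k=1}^{m}\frac{\partial f_j}{\partial\bar w_k}\frac{\partial\overline{w_k}}{\partial\bar z_l}+\frac{\partial f_j}{\partial\bar z_l}.
\]
Since each $f_j$ is holomorphic in $(w,z)$ we have $\partial f_j/\partial\bar w_k=0$ and $\partial f_j/\partial\bar z_l=0$, so the relation collapses to $\sum_k(\partial f_j/\partial w_k)(\partial w_k/\partial\bar z_l)=0$ for every $j,l$. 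By continuity the matrix $(\partial f_j/\partial w_k)$ stays invertible near $z^*$, forcing $\partial w_k/\partial\bar z_l=0$ there; thus $w$ obeys the Cauchy--Riemann equations and, being $C^1$, is holomorphic, while uniqueness among holomorphic solutions is inherited from the uniqueness already supplied by the real theorem. The only real subtlety---hence the step I would watch most carefully---is this last computation: it is precisely the vanishing of the anti-holomorphic derivatives of the $f_j$ that annihilates the extra chain-rule terms and pins down $\partial w/\partial\bar z=0$, so one must keep the holomorphy of the data in play rather than treating the system as merely smooth. For $m=1$ one may bypass the regularity argument altogether, representing the unique nearby zero by the Cauchy integral $w(z)=\frac{1}{2\pi i}\oint w\,\frac{\partial_w f(w,z)}{f(w,z)}\,dw$ over a small circle, whose holomorphy in $z$ is manifest, and then inducting on $m$.
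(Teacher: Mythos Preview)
The paper does not prove this theorem at all; it is quoted verbatim from H\"ormander and used as a black box to establish analyticity of the functions $\tau(\theta)$, $\theta_k(\theta)$, $\eta_j(\theta)$. So there is no ``paper's proof'' to compare against.

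That said, your argument is correct and is one of the standard proofs. The reduction to the real implicit function theorem via $\det_{\mathbb{R}}A=|\det_{\mathbb{C}}A|^{2}$ is exactly right, and the Wirtinger chain-rule computation showing $\partial w_k/\partial\bar z_l=0$ is the clean way to upgrade the smooth solution to a holomorphic one. The alternative you sketch at the end---writing the simple root as a residue integral $\frac{1}{2\pi i}\oint w\,\partial_w f/f\,dw$ and inducting on $m$---is closer in spirit to how H\"ormander himself organizes the proof (via the Weierstrass preparation theorem and one-variable Cauchy theory), but either route works and neither is needed for the present paper, which only invokes the statement.
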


We are now ready to state and prove the following lemma. 
\begin{lem}
The functions $\tau(\theta)$, $\theta_{k}(\theta)$, $-p_{-} < k\le p_{+}$,
and $\eta_{j}(\theta)$, $-q_{-} < j\le q_{+}$, are analytic in a neighborhood
of $(0,\pi/r)$. 
\end{lem}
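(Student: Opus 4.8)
The plan is to exhibit $\tau$ and all the angles as the solution of a square system of holomorphic equations and to invoke the complex Implicit Function Theorem (Theorem \ref{IFT}) with $\theta$ as the parameter. Fix a real $\theta^{*}\in(0,\pi/r)$ and let $(\tau^{*},\{\theta_{k}^{*}\},\{\eta_{j}^{*}\})$ be the solution supplied by Lemma \ref{lem:tauexistence}. I would take as unknowns $w=(\tau,\{\theta_{k}\}_{-p_{-}<k\le p_{+}},\{\eta_{j}\}_{-q_{-}<j\le q_{+}})$ and impose, from \eqref{eq:taudef}, the equations
\[
F_{k}:=\tau-\tau_{k}\frac{\sin\theta_{k}}{\sin(\theta_{k}-\theta)}=0,\qquad G_{j}:=\tau-\gamma_{j}\frac{\sin\eta_{j}}{\sin(\eta_{j}-\theta)}=0,
\]
one for each zero of $P$ and of $Q$, together with the single equation
\[
H:=\sum_{-p_{-}<k\le p_{+}}\theta_{k}-\sum_{-q_{-}<j\le q_{+}}\eta_{j}-r\theta-(p_{+}-q_{+}-1)\pi=0
\]
from \eqref{eq:sumangles}. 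Each $F_{k},G_{j},H$ is holomorphic in $(w,\theta)$ near the (real) solution point, the only singularities of $F_{k},G_{j}$ occurring where $\sin(\theta_{k}-\theta)=0$, which we avoid; and by construction the real solution satisfies the whole system.

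The heart of the matter is the Jacobian condition $\det(\partial(F,G,H)/\partial w)\ne0$. Since $F_{k}$ depends only on $(\tau,\theta_{k})$ and $G_{j}$ only on $(\tau,\eta_{j})$, this Jacobian is a bordered diagonal (``arrow'') matrix: its $\tau$-column consists of ones because $\partial F_{k}/\partial\tau=\partial G_{j}/\partial\tau=1$, its $H$-row has entries $\partial H/\partial\theta_{k}=1$, $\partial H/\partial\eta_{j}=-1$ and $\partial H/\partial\tau=0$, and its remaining diagonal entries are
\[
a_{k}:=\frac{\partial F_{k}}{\partial\theta_{k}}=\frac{\tau_{k}\sin\theta}{\sin^{2}(\theta_{k}-\theta)},\qquad b_{j}:=\frac{\partial G_{j}}{\partial\eta_{j}}=\frac{\gamma_{j}\sin\theta}{\sin^{2}(\eta_{j}-\theta)}.
\]
A Schur-complement (cofactor) expansion then yields
\[
\det\frac{\partial(F,G,H)}{\partial w}=-\Big(\prod_{k}a_{k}\Big)\Big(\prod_{j}b_{j}\Big)\Big(\sum_{k}\frac{1}{a_{k}}-\sum_{j}\frac{1}{b_{j}}\Big),
\]
so everything hinges on the last factor. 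Writing $t-\tau_{k}=|t-\tau_{k}|e^{i\theta_{k}}$ and taking imaginary parts in $(t-\tau_{k})e^{-i\theta}=\tau-\tau_{k}e^{-i\theta}$ gives the law-of-sines identity $|t-\tau_{k}|\sin(\theta_{k}-\theta)=\tau_{k}\sin\theta$, whence $1/a_{k}=\sin\theta\cdot\tau_{k}/|t-\tau_{k}|^{2}$ and, likewise, $1/b_{j}=\sin\theta\cdot\gamma_{j}/|t-\gamma_{j}|^{2}$. Comparing with \eqref{eq:ImRexp} and recalling $\Im t=\tau\sin\theta$ then collapses the bracket to
\[
\sum_{k}\frac{1}{a_{k}}-\sum_{j}\frac{1}{b_{j}}=\sin\theta\Big(\sum_{k}\frac{\tau_{k}}{|t-\tau_{k}|^{2}}-\sum_{j}\frac{\gamma_{j}}{|t-\gamma_{j}|^{2}}\Big)=\frac{\Im R(t)}{\tau}.
\]

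It remains to check that this determinant is nonzero at the real solution. Each $a_{k}$ and $b_{j}$ is nonzero since $\tau_{k},\gamma_{j}\ne0$ (because $P(0),Q(0)\ne0$), $\sin\theta\ne0$ on $(0,\pi/r)$, and $\sin(\theta_{k}-\theta)\ne0$ (the point $t=\tau e^{i\theta}$ lies strictly in the upper half-plane, forcing $\theta_{k}\ne\theta$). For the remaining factor, Lemma \ref{lem:tauupperbound} places $t$ in the sector $\{t:0<|t|<\tau_{2},\,0<\Arg t<\pi/r\}$, on which condition \eqref{cond3} of Theorem \ref{thm:maintheorem} gives $\Im R(t)>0$; hence $\Im R(t)/\tau>0$ and the Jacobian does not vanish. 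Theorem \ref{IFT} then furnishes analytic functions $\tau(\theta),\theta_{k}(\theta),\eta_{j}(\theta)$ on a complex neighborhood of $\theta^{*}$, agreeing there with the unique real solution of Lemma \ref{lem:tauexistence}; as $\theta^{*}\in(0,\pi/r)$ was arbitrary, these functions are analytic in a neighborhood of the whole interval. The one genuinely delicate step is the evaluation of the arrow determinant together with its reduction to $\Im R(t)$; the rest is bookkeeping, and the positivity of $\Im R(t)$ — assumed as a hypothesis and localized to the relevant sector by Lemma \ref{lem:tauupperbound} — is precisely what drives the conclusion.
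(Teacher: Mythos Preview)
Your proof is correct and follows essentially the same route as the paper: set up the square system from \eqref{eq:taudef} and \eqref{eq:sumangles}, compute the bordered-diagonal Jacobian, reduce its determinant to a constant multiple of $\Im R(t)$, and invoke condition \eqref{cond3} (you are slightly more explicit in citing Lemma \ref{lem:tauupperbound} to place $t$ in the required sector, which the paper leaves implicit). The only cosmetic difference is a sign convention---your $F_{k},G_{j}$ are the negatives of the paper's $f_{k},g_{j}$, so your $a_{k},b_{j}$ are $-c_{k},-d_{j}$---but the determinant and its identification with $\Im R(t)/\tau$ come out the same.
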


\begin{proof}
Let $\overrightarrow{\theta}$ and $\overrightarrow{\eta}$ be the
$n$-tuples of $\theta_{k}$ , $-p_{-}<k\le p_{+}$, and $s$-tuples
of $\eta_{j}$, $-q_{-}<j\le q_{+}$. We define the functions $f_{k},g_{j}:\mathbb{C}^{n+s+1}\times\mathbb{C}\to\mathbb{C}$
by 
\begin{align*}
f_{k}(\overrightarrow{\theta},\overrightarrow{\eta},\tau,\theta) & =\tau_{k}\frac{\sin\theta_{k}}{\sin(\theta_{k}-\theta)}-\tau,\qquad(-p_{-}<k\le p_{+})\\
g_{j}(\overrightarrow{\theta},\overrightarrow{\eta},\tau,\theta) & =\gamma_{j}\frac{\sin\eta_{j}}{\sin(\eta_{j}-\theta)}-\tau,\qquad(-q_{-}\le j\le q_{+})
\end{align*}
and 
\[
h(\overrightarrow{\theta},\overrightarrow{\eta},\tau,\theta)=\sum_{-p_{-}<k\le p_{+}}\theta_{k}-\sum_{-q_{-}<j\le q_{+}}\eta_{j}-r\theta-(p_{+}-q_{+}-1)\pi.
\]
Note that for each $\theta\in(0,\pi/r)$, there exist $\theta_{k}$,
$\eta_{k}$, and $\tau$ so that 
\begin{align*}
f_{k}(\overrightarrow{\theta},\overrightarrow{\eta}\tau,\theta) & =0,\qquad(-p_{-}<k\le p_{+})\\
g_{j}(\overrightarrow{\theta},\overrightarrow{\eta},\tau,\theta) & =0,\qquad(-q_{-}<j\le q_{+})\\
h(\overrightarrow{\theta},\overrightarrow{\eta},\tau,\theta) & =0,
\end{align*}
and that there exists a neighborhood $\mathcal{W}$ of $(\overrightarrow{\theta},\overrightarrow{\eta}\tau,\theta)\in\mathbb{C}^{n+s+1}\times\mathbb{C}$
where each of these function is analytic. We calculate 
\begin{align*}
\frac{\partial f_{k}}{\partial\theta_{k}} & =\frac{-\tau_{k}\sin\theta}{\sin^{2}(\theta_{k}-\theta)}=:c_{k},\\
\frac{\partial g_{j}}{\partial\eta_{j}} & =\frac{-\gamma_{j}\sin\theta}{\sin^{2}(\eta_{j}-\theta)}=:d_{j},
\end{align*}
and hence the Jacobian matrix at $(\overrightarrow{\theta},\overrightarrow{\eta},\tau,\theta)$
is 
\[
\left[\begin{array}{ccc}
C & 0 & -1\\
0 & D & -1\\
1 & -1 & 0
\end{array}\right]
\]
where $C$ and $D$ are two $n\times n$ and $s\times s$ diagonal
matrices whose diagonal entries are $c_{k}$, $-p_{-}<k\le p_{+}$,
and $d_{j}$, $-q_{-}<j\le q_{+}$. By expanding along the first row,
we find the determinant of this matrix to be 
\[
\pm\prod_{-p_{-}<k\le p_{+}}c_{k}\prod_{-q_{-}<j\le q_{+}}d_{j}\left(\sum_{-p_{-}<k\le p_{+}}\frac{1}{c_{k}}-\sum_{-q_{-}<j\le q_{+}}\frac{1}{d_{j}}\right).
\]
We now show that this expression is nonzero. To this end note that
equation \eqref{eq:taudef} implies 
\begin{equation}
t-\tau_{k}=\tau_{k}\frac{\cos\theta_{k}\sin\theta+i\sin\theta_{k}\sin\theta}{\sin(\theta_{k}-\theta)}=\tau_{k}\frac{\sin\theta}{\sin(\theta_{k}-\theta)}e^{i\theta_{k}}=\frac{\tau\sin\theta}{\sin\theta_{k}}e^{i\theta_{k}}\label{eq:tau-t}
\end{equation}
for $-p_{-}<k\leq p_{+}$, and 
\begin{equation}
t-\gamma_{j}=\gamma_{k}\frac{\sin\theta}{\sin(\eta_{j}-\theta)}e^{i\eta_{j}}=\frac{\tau\sin\theta}{\sin\eta_{j}}e^{i\eta_{j}}.\label{eq:gamma-t}
\end{equation}
for $-q_{-}<j\leq q_{+}$. Together with \eqref{eq:ImRexp} these
identities yield 
\begin{align*}
\frac{\Im R(t)}{\Im t} & =\sum_{-p_{-}<k\le p_{+}}\frac{\sin^{2}(\theta_{k}-\theta)}{\tau_{k}\sin^{2}\theta}-\sum_{-q_{-}<j\le q_{+}}\frac{\sin^{2}(\eta_{j}-\eta)}{\gamma_{j}\sin^{2}\theta}\\
 & =-\frac{1}{\sin\theta}\left(\sum_{-p_{-}<k\le p_{+}}\frac{1}{c_{k}}-\sum_{-q_{-}<j\le q_{+}}\frac{1}{d_{j}}\right).
\end{align*}
The result now follows from condition (\ref{cond3}) in Theorem \ref{thm:maintheorem}. 
\end{proof}
\begin{figure}
\includegraphics[scale=0.4]{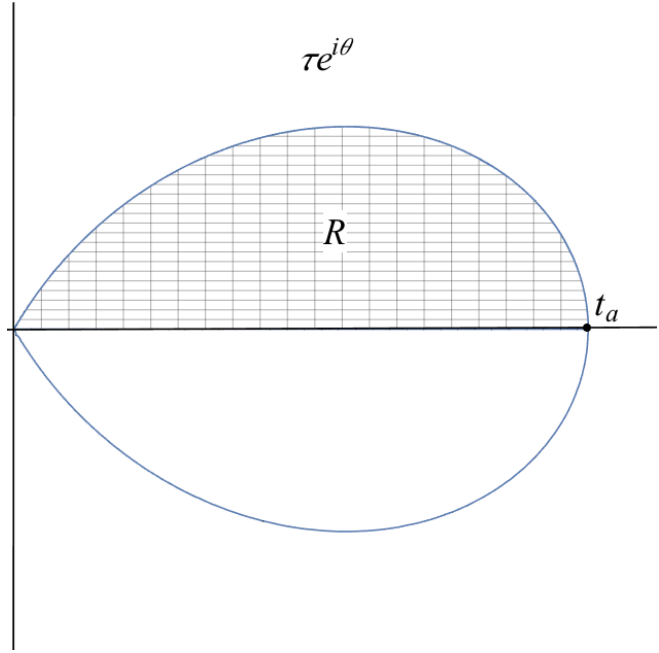} \caption{\label{fig:taucurve}The $\tau(\theta)e^{i\theta}$ curve and the
set $\mathcal{R}$ (c.f. Lemma \ref{lem:nonzeroIm}) for $P(t)=(t-1)(t-2)(t-3)$,
$Q(t)=(t+3)(t-4)$ and $r=3$}
\end{figure}

\begin{defn} \label{def:zthetadef}
We define the function $z(\theta)$
for $\theta\in(0,\pi/r)$ by
\begin{equation}
z(\theta)=-\frac{P(t)}{t^{r}Q(t)},\label{eq:zthetadef}
\end{equation}
where $t=\tau(\theta)e^{i\theta}$.
\end{defn}
\begin{lem}
\label{lem:zmonotone} Under the hypotheses of Theorem \ref{thm:maintheorem}
the function $(-1)^{p_{+}-q_{+}}z(\theta)$ is real valued, increasing
and positive on $(0,\pi/r)$. 
\end{lem}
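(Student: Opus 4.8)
The plan is to express $(-1)^{p_{+}-q_{+}}z(\theta)$ directly in terms of the analytic function $f$ of \eqref{eq:logfunc}, and then to differentiate along the curve $t(\theta)=\tau(\theta)e^{i\theta}$ using the polar Cauchy--Riemann relations \eqref{eq:CauchyRiemann}. The preceding lemma guarantees that $\tau(\theta)$ is analytic on a neighborhood of $(0,\pi/r)$, so $t(\theta)$ and $z(\theta)$ are differentiable and all the manipulations below are legitimate.

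First I would record that, since $\theta_{k}(t)=\Arg(t-\tau_{k})$ and $\eta_{j}(t)=\Arg(t-\gamma_{j})$ lie in $(0,\pi)$ for $\Im t>0$, the quantity on the left of \eqref{eq:sumangles} equals $-\Im f(t)$, while $\Re f(t)=\ln\left|t^{r}Q(t)/P(t)\right|$. Because $e^{-f}=P(t)/(t^{r}Q(t))$, the definition \eqref{eq:zthetadef} gives $z(\theta)=-e^{-f(t)}=-e^{-\Re f}e^{-i\Im f}$. The defining relation \eqref{eq:sumangles} forces $\Im f=(q_{+}-p_{+}+1)\pi$, an integer multiple of $\pi$, so $e^{-i\Im f}=(-1)^{q_{+}-p_{+}+1}$ and hence
\[
(-1)^{p_{+}-q_{+}}z(\theta)=e^{-\Re f(t(\theta))}=\left|\frac{P(t)}{t^{r}Q(t)}\right|>0,
\]
which settles the claims that $(-1)^{p_{+}-q_{+}}z(\theta)$ is real valued and positive.

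For monotonicity I would show that $\Re f(t(\theta))$ is strictly decreasing. Writing $u(\tau,\theta)=\Re f(\tau e^{i\theta})$ and $v(\tau,\theta)=\Im f(\tau e^{i\theta})$, equations \eqref{eq:CauchyRiemann} read $\tau u_{\tau}=v_{\theta}=\Re R$ and $u_{\theta}=-\tau v_{\tau}=-\Im R$. Since $v(\tau(\theta),\theta)$ is the constant $(q_{+}-p_{+}+1)\pi$ along the curve, implicit differentiation gives $\tau'(\theta)=-v_{\theta}/v_{\tau}=-\tau\,\Re R/\Im R$, which is legitimate provided $\Im R\neq0$ on the curve (verified below). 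Substituting into the chain rule $\tfrac{d}{d\theta}\Re f=u_{\tau}\,\tau'(\theta)+u_{\theta}$ and simplifying collapses the cross terms into the clean identity
\[
\frac{d}{d\theta}\Re f(t(\theta))=-\frac{(\Re R)^{2}+(\Im R)^{2}}{\Im R}=-\frac{|R(t)|^{2}}{\Im R(t)}.
\]

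The final step is to verify that $\Im R(t)>0$ along the curve, which makes the right-hand side strictly negative and completes the proof. This is exactly where the hypotheses enter: by Lemma \ref{lem:tauupperbound}, every point $t(\theta)=\tau(\theta)e^{i\theta}$ with $0<\theta<\pi/r$ satisfies $\tau(\theta)<\tau_{2}$, so $t(\theta)$ lies in the sector $\{t\mid 0<|t|<\tau_{2},\ 0<\Arg t<\pi/r\}$, on which condition \eqref{cond3} guarantees $\Im R(t)>0$. In particular $R(t)\neq0$ there, so $|R(t)|^{2}>0$ and the derivative is strictly negative; thus $e^{-\Re f}=(-1)^{p_{+}-q_{+}}z(\theta)$ is strictly increasing. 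I expect the main obstacle to be purely the bookkeeping: correctly identifying $\Im f$ with the angle sum together with its parity, and carefully combining the polar Cauchy--Riemann relations with the implicit derivative $\tau'(\theta)$ so that the terms telescope into $-|R|^{2}/\Im R$. Once that identity is in hand, Lemma \ref{lem:tauupperbound} and condition \eqref{cond3} do the rest.
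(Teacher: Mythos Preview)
Your proof is correct and follows essentially the same approach as the paper: both derive the key identity $\Im R\cdot\dfrac{d\ln|z|}{d\theta}=|R|^{2}$ (equivalently, your $\tfrac{d}{d\theta}\Re f=-|R|^{2}/\Im R$) from the polar Cauchy--Riemann relations together with the constancy of $\Im f$ along the curve, and then invoke Lemma~\ref{lem:tauupperbound} and condition~\eqref{cond3} to get $\Im R>0$. The only cosmetic difference is that the paper establishes reality and positivity via the explicit product formula \eqref{eq:signz}, whereas you obtain it more directly from $z=-e^{-f}$ and the integrality of $\Im f/\pi$; both routes are valid.
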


\begin{proof}
We write \eqref{eq:zthetadef} as 
\[
z(\theta)=-\frac{\prod_{-p_{-}<k\le p_{+}}\left(\tau e^{i\theta}-\tau_{k}\right)}{\tau^{r}e^{ir\theta}\prod_{-q_{-}<j\le q_{+}}(\tau e^{i\theta}-\gamma_{j})}.
\]

With \eqref{eq:tau-t}, \eqref{eq:gamma-t} and \eqref{eq:sumangles},
this equation becomes 
\begin{equation}
z(\theta)=(-1)^{p_{+}-q_{+}}(\tau\sin\theta)^{n-s}\cdot\frac{\prod_{-q_{-}<j\le q_{+}}\sin\eta_{j}}{\prod_{-p_{-}<k\le p_{+}}\sin\theta_{k}}\label{eq:signz}
\end{equation}
from which we deduce that $(-1)^{p_{+}-q_{+}}z(\theta)$ is a positive
real-valued function on $(0,\pi/r)$. We continue by showing that
$\ln|z|$ (and hence $|z|$) is strictly increasing on this interval.
Since $\operatorname{Im}f$ is constant on the $\tau$-curve, by computing
its derivative with respect to $\theta$ there we obtain 
\begin{equation}
0=\frac{\partial\Im f}{\partial\tau}\frac{d\tau}{d\theta}+\frac{\partial\Im f}{\partial\theta}.\label{eq:zerodiff}
\end{equation}
Multiplying both sides of the equation 
\begin{align*}
\frac{d\ln|z|}{d\theta} & =-\frac{\partial\Re f}{\partial\tau}\frac{d\tau}{d\theta}-\dfrac{\partial\Re f}{\partial\theta}
\end{align*}
by $\tau\partial\Im f/\partial\tau$ and combining \eqref{eq:CauchyRiemann}
with \eqref{eq:zerodiff} results in the equation 
\[
\Im R(\tau e^{i\theta})\frac{d\ln|z|}{d\theta}=\left|R(\tau e^{i\theta})\right|^{2}.
\]
By assumption (3) in Theorem \ref{thm:maintheorem} and Lemma \ref{lem:tauupperbound}, $\operatorname{Im}R>0$
on the $\tau$ curve, and hence the right hand side of the above equation
is strictly positive there. The result now follows. 
\end{proof}
\begin{lem}
\label{lem:endlimits}Let $\tau(\theta)$ and $z(\theta)$ be defined
as in (\ref{eq:taudef}) and (\ref{eq:zthetadef}) respectively for
$\theta\in(0,\pi/r)$. Then 
\begin{itemize}
\item[(i)] $\lim_{\theta\rightarrow0}\tau(\theta)=t_{a}$, 
\item[(ii)] $\lim_{\theta\rightarrow0}z(\theta)=-P(t_{a})/t_{a}^{r}Q(t_{a})$, 
\item[(iii)] $\lim_{\theta\rightarrow\pi/r}\tau(\theta)=0$, \quad{}\text{
and} 
\item[(iv)] $\lim_{\theta\rightarrow\pi/r}z(\theta)=(-1)^{p_{+}-q_{+}}\infty$. 
\end{itemize}
\end{lem}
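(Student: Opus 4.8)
The plan is to extract all the content from the two limits of $\tau(\theta)$ in parts (i) and (iii); the limits of $z(\theta)$ in (ii) and (iv) then follow immediately by feeding these into the rational function $z(\theta)=-P(t)/(t^rQ(t))$ from \eqref{eq:zthetadef}. Throughout I would work with the defining function
\[
G(\tau,\theta)=\sum_{-p_-<k\le p_+}\theta_k(\tau e^{i\theta})-\sum_{-q_-<j\le q_+}\eta_j(\tau e^{i\theta})-r\theta-(p_+-q_+-1)\pi,
\]
so that by Lemma \ref{lem:tauexistence} the value $\tau(\theta)$ is the unique zero of $G(\cdot,\theta)$ in $(0,\tau_2)$, with $\tau(\theta)<\tau_2$ by Lemma \ref{lem:tauupperbound}. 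The engine of both arguments is monotonicity: by \eqref{eq:ImRexp}, Lemma \ref{lem:ImR} and condition \eqref{cond3}, for each fixed $\theta\in(0,\pi/r)$ the map $\tau\mapsto G(\tau,\theta)$ is strictly decreasing on $(0,\tau_2)$. Hence, to squeeze $\tau(\theta)$, it suffices to evaluate $G$ at well-chosen fixed test values of $\tau$ and compare signs with $G(\tau(\theta),\theta)=0$.

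For (i) the obstruction is that at $\theta=0$ every angle degenerates ($\theta_k,\eta_j\to0$ or $\pi$), so \eqref{eq:sumangles} is satisfied to leading order by any $\tau\in(\tau_1,\tau_2)$ and the limit is invisible at this order. I would therefore expand to first order in $\theta$: for fixed $\tau\in(\tau_1,\tau_2)$ one has $\partial_\theta\theta_k|_{\theta=0}=\tau/(\tau-\tau_k)$ (and similarly for $\eta_j$), whence $G(\tau,\theta)=-R(\tau)\,\theta+o(\theta)$, so that $\operatorname{sign}G(\tau,\theta)=-\operatorname{sign}R(\tau)$ for small $\theta>0$. Since $R$ is strictly increasing on $(\tau_1,\tau_2)$ with its unique zero at $t_a$ (Lemma \ref{lem:taexistence}), choosing $\tau=t_a\pm\epsilon$ gives $G(t_a+\epsilon,\theta)<0<G(t_a-\epsilon,\theta)$ for small $\theta$; monotonicity then forces $t_a-\epsilon<\tau(\theta)<t_a+\epsilon$, proving (i) (the degenerate case $\tau_1=\tau_2=t_a$ uses only the lower comparison together with $\tau(\theta)<\tau_2$). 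Part (ii) is then immediate: $t=\tau(\theta)e^{i\theta}\to t_a$, and since $t_a>0$ is neither a pole nor a zero of the denominator of $z$, continuity of $z(\theta)=-P(t)/(t^rQ(t))$ gives $z(\theta)\to-P(t_a)/(t_a^rQ(t_a))$.

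For (iii) I would fix $\epsilon\in(0,t_a)$ and show $\tau(\theta)<\epsilon$ for $\theta$ close to $\pi/r$, which by monotonicity is equivalent to $G(\epsilon,\theta)<0$. Letting $\theta\to(\pi/r)^-$ with $\epsilon$ fixed gives $G(\epsilon,\theta)\to\sum_k\theta_k(\epsilon e^{i\pi/r})-\sum_j\eta_j(\epsilon e^{i\pi/r})-(p_+-q_+)\pi$. The quantity $\Phi(\tau):=\sum_k\theta_k(\tau e^{i\pi/r})-\sum_j\eta_j(\tau e^{i\pi/r})$ tends to $(p_+-q_+)\pi$ as $\tau\to0^+$, and it is strictly decreasing on $(0,t_a)$ by Lemma \ref{lem:ImR} together with condition \eqref{cond4}; hence $\Phi(\epsilon)<(p_+-q_+)\pi$ and the limit of $G(\epsilon,\theta)$ is negative. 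Thus $\tau(\theta)<\epsilon$ near $\pi/r$, and letting $\epsilon\to0$ yields (iii). Finally (iv) follows from $|z(\theta)|=|P(t)|/(\tau^r|Q(t)|)$: as $\tau=\tau(\theta)\to0$ the numerator tends to $|P(0)|\ne0$ and $|Q(t)|\to|Q(0)|\ne0$, so $|z|\to\infty$, while the sign $(-1)^{p_+-q_+}$ is fixed by Lemma \ref{lem:zmonotone}, giving $z(\theta)\to(-1)^{p_+-q_+}\infty$.

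The delicate point is the $\theta\to\pi/r$ endpoint in (iii). The ray $\Arg t=\pi/r$ lies exactly on the boundary of the sector in condition \eqref{cond3}, so the monotonicity of $\Phi$ there is not supplied by \eqref{cond3}; it is precisely condition \eqref{cond4} (which guarantees $\Im R>0$ up to $\Arg t=\pi/r$, indeed up to $\pi$, on the semi-disk $|t|<t_a$) that makes $\Phi$ strictly decreasing on $(0,t_a)$. This is why the squeeze must be run with test values $\epsilon<t_a$: doing so confines the whole argument to the semi-disk of \eqref{cond4} and sidesteps the unknown sign of $\Im R$ on the ray $\Arg t=\pi/r$ for $|t|\in[t_a,\tau_2)$, which is the one place the hypotheses give no direct control.
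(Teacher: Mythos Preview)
Your argument is correct and in places more transparent than the paper's. The main divergence is in part (i): the paper first proves that $\lim_{\theta\to 0}\tau(\theta)$ \emph{exists} by deriving the ODE $d\tau/d\theta=-(\Re R)/(\tau\,\Im R)$ along the $\tau$-curve and noting that rationality of $R$ forces only finitely many critical points, so the bounded function $\tau(\theta)$ must have endpoint limits; it then \emph{identifies} the limit by observing that the conjugate roots $\tau(\theta)e^{\pm i\theta}$ of $z(\theta)+P(t)/(t^rQ(t))=0$ collide as $\theta\to 0$, making the limit a double root and hence a zero of $P(t)R(t)$, which by Lemmas \ref{lem:taexistence} and \ref{lem:tauupperbound} must be $t_a$. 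Your route bypasses both the ODE and the double-root step with a direct sandwich based on $G(\tau,\theta)=-R(\tau)\theta+o(\theta)$ for $\tau\in(\tau_1,\tau_2)$; this is more elementary and proves existence and value of the limit in one stroke. For (iii) the paper gives only a one-line pointer to Lemma \ref{lem:ImR} and \eqref{eq:sumangles}; your version is essentially a fleshed-out form of that pointer, and you correctly isolate that on the boundary ray $\Arg t=\pi/r$ the needed monotonicity of $\Phi$ is supplied by condition \eqref{cond4} rather than \eqref{cond3}, which is why the squeeze must run with $\epsilon<t_a$. Parts (ii) and (iv) match the paper's reasoning.
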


\begin{proof}
Combining the Cauchy-Riemman equations \eqref{eq:CauchyRiemann} with
equation \eqref{eq:zerodiff} we find that along the $\tau$-curve
\[
\dfrac{d\tau}{d\theta}=-\frac{1}{\tau}\frac{\Re R(t)}{\Im R(t)}.
\]
Recall that $R(t)$ is a rational function, and hence the number of
critical points of $\tau(\theta)$ on $(0,\pi/r)$ is finite. Since
$0\leq\tau(\theta)\leq\tau_{2}$, $\tau(\theta)$ is bounded, and
consequently the limits $\lim_{\theta\rightarrow0}\tau(\theta)$ and
$\lim_{\theta\rightarrow\pi/r}\tau(\theta)$ exist. Consider now the
two solutions $t_{0,1}=\tau(\theta)e^{\pm i\theta}$ to the equation
$z(\theta)+\frac{P(t)}{t^{r}Q(t)}=0$. Lemma \ref{lem:zmonotone}
implies that $\lim_{\theta\to0}z(\theta)=:z(0)$ exists, therefore
$\lim_{\theta\to0}\tau(\theta)$ is a double root of $z(0)+P(t)/t^{r}Q(t)$.
As such, it is also a root of 
\[
\dfrac{d}{dt}\left(z(0)+\dfrac{P(t)}{t^{r}Q(t)}\right)=-P(t)R(t)/t^{r+1}Q(t).
\]
Having established these facts, \\
 (i) is now a straightforward consequence of Lemmas \ref{lem:taexistence}
and \ref{lem:tauupperbound};\\
 (ii) follows from (i) and the definition of $z(\theta)$;\\
 (iii) follows from Lemma \ref{lem:ImR} and equation \eqref{eq:sumangles};
and \\
 (iv) is easily seen using (iii) and the definition of $z(\theta)$
to establish that $\lim_{\theta\rightarrow\pi/r}|z(\theta)|=\infty$,
and noting that the sign of $z(\theta)$ is $(-1)^{p_{+}-q_{+}}$
by \eqref{eq:signz}. \\
\end{proof}
In light of Lemma \ref{lem:endlimits}, we will henceforth understand $\tau(\theta)$ to be a continuous function
on $[0,\pi/r]$. The next lemma establishes that the function $P(t)/t^{r}Q(t)$ is real valued on the boundary of the set shown in Figure \ref{fig:taucurve}, but
nowhere in its interior.
\begin{lem}
\label{lem:nonzeroIm}If 
\begin{equation}
\mathcal{R}=\left\{ t=|t|e^{i\theta}\in\mathbb{C}\ |\ 0<\theta<\pi/r\text{ and }0<|t|\le\tau(\theta)\right\} \label{eq:regionR}
\end{equation}
and $t=|t|e^{i\theta}\in\mathcal{R}$, then 
\[
\Im\frac{P(t)}{t^{r}Q(t)}=0
\]
if and only if $t=\tau(\theta)e^{i\theta}$. 
\end{lem}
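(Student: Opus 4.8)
The plan is to reduce the vanishing of the imaginary part to a statement about the angle sum in \eqref{eq:sumangles}, and then exploit the strict monotonicity furnished by Lemma \ref{lem:ImR}. The starting observation is that, since $P$ and $Q$ are monic, the function $f$ of \eqref{eq:logfunc} satisfies $e^{f(t)}=t^{r}Q(t)/P(t)$ on $\Im t>0$, so that $P(t)/t^{r}Q(t)=e^{-f(t)}$. Consequently $\Im\frac{P(t)}{t^{r}Q(t)}=0$ exactly when $\arg e^{-f(t)}=-\Im f(t)$ is an integer multiple of $\pi$, i.e.\ when $\Im f(t)\in\pi\mathbb{Z}$. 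Invoking the identity $\sum_{-p_{-}<k\le p_{+}}\theta_{k}(t)-\sum_{-q_{-}<j\le q_{+}}\eta_{j}(t)-r\theta=-\Im f(t)$ recorded just before Lemma \ref{lem:ImR}, I would set
\[
\Phi(\tau):=\sum_{-p_{-}<k\le p_{+}}\theta_{k}(\tau e^{i\theta})-\sum_{-q_{-}<j\le q_{+}}\eta_{j}(\tau e^{i\theta})-r\theta,
\]
so that the condition $\Im\frac{P}{t^{r}Q}=0$ becomes precisely $\Phi(\tau)\in\pi\mathbb{Z}$.

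Next, I would fix $\theta\in(0,\pi/r)$ and restrict attention to the radial segment $\{\tau e^{i\theta}\ |\ 0<\tau\le\tau(\theta)\}$, the slice of $\mathcal{R}$ at angle $\theta$. By Lemma \ref{lem:tauupperbound} we have $\tau(\theta)<\tau_{2}$, so this entire segment lies inside the sector of condition \eqref{cond3} in Theorem \ref{thm:maintheorem}, where $\Im R(t)>0$. The equivalence of statements (1) and (2) in Lemma \ref{lem:ImR} then shows that $\sum_{k}\theta_{k}-\sum_{j}\eta_{j}$, and hence $\Phi(\tau)$, is strictly decreasing in $\tau$ on $(0,\tau(\theta)]$.

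Finally I would pin down the two endpoint values of $\Phi$ and argue by confinement to a single period. As $\tau\to0^{+}$ the computation in the proof of Lemma \ref{lem:tauexistence} gives $\Phi(\tau)\to(p_{+}-q_{+})\pi-r\theta$, while at the outer endpoint $\Phi(\tau(\theta))=(p_{+}-q_{+}-1)\pi$ by the defining relation \eqref{eq:sumangles}. Since $0<\theta<\pi/r$ forces $0<r\theta<\pi$, the limiting value lies strictly inside the open interval $I=\bigl((p_{+}-q_{+}-1)\pi,\,(p_{+}-q_{+})\pi\bigr)$. Strict monotonicity then traps $\Phi\bigl((0,\tau(\theta))\bigr)$ inside $I$; as $I$ contains no integer multiple of $\pi$, the only $\tau\in(0,\tau(\theta)]$ with $\Phi(\tau)\in\pi\mathbb{Z}$ is $\tau=\tau(\theta)$, where $\Phi=(p_{+}-q_{+}-1)\pi$. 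This yields the \emph{only if} direction, and the \emph{if} direction is immediate from \eqref{eq:sumangles}. The one point demanding care is the passage from condition \eqref{cond3}, stated on the full sector $|t|<\tau_{2}$, to monotonicity along the shorter segment $\tau\le\tau(\theta)$: this is exactly what Lemma \ref{lem:tauupperbound} secures by guaranteeing $\tau(\theta)<\tau_{2}$, and without it the sign of $\Im R$ (and thus the monotonicity of $\Phi$) could fail before the curve is reached.
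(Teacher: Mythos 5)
Your proposal is correct and follows essentially the same route as the paper: both reduce the vanishing of $\Im\frac{P(t)}{t^{r}Q(t)}$ to the angle-sum condition $\sum\theta_{k}-\sum\eta_{j}-r\theta\in\pi\mathbb{Z}$, use Lemma \ref{lem:ImR}(2) (via condition \eqref{cond3} and Lemma \ref{lem:tauupperbound}) to get strict monotonicity in $\tau$, and trap the angle sum strictly between $(p_{+}-q_{+}-1)\pi$ and $(p_{+}-q_{+})\pi$ for $0<|t|<\tau(\theta)$, forcing $l=p_{+}-q_{+}-1$ and hence $|t|=\tau(\theta)$ by uniqueness. The only cosmetic difference is that the paper phrases the reduction through the identity $P(t)Q(\overline{t})/P(\overline{t})Q(t)=e^{2ri\theta}$ rather than through $P(t)/t^{r}Q(t)=e^{-f(t)}$, and your explicit appeal to Lemma \ref{lem:tauupperbound} to place the radial segment inside the sector of condition \eqref{cond3} is a point the paper leaves implicit.
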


\begin{proof}
If $t=\tau(\theta)e^{i\theta}$ then by the reality of $z(\theta)$
(c.f. Lemma \ref{lem:zmonotone}) 
\[
\Im\frac{P(t)}{t^{r}Q(t)}=\Im\left(-z(\theta)\right)=0.
\]
If $t=|t|e^{i\theta}\in\mathcal{R}$ and 
\[
\Im\frac{P(t)}{t^{r}Q(t)}=0
\]
then 
\[
(\dag)\qquad\frac{P(t)Q(\overline{t})}{P(\overline{t})Q(t)}=e^{2ri\theta}.
\]
For $-p_{-}<k\le p_{+}$ and $-q_{-}<j\le q_{+}$ we define the angles
$\theta_{k}(t)$ and $\eta_{j}(t)$ via the equations 
\begin{align*}
\frac{\tau_{k}-|t|e^{i\theta}}{\tau_{k}-|t|e^{-i\theta}} & =e^{2i\theta_{k}(t)}\\
\frac{\gamma_{j}-|t|e^{i\theta}}{\gamma_{j}-|t|e^{-i\theta}} & =e^{2i\eta_{j}(t)}.
\end{align*}
Substituting these expressions in $(\dag)$ and equating exponents
yields 
\[
\sum_{-p_{-}<k\le p_{+}}\theta_{k}(t)-\sum_{q_{-}<j\le q_{+}}\eta_{j}(t)=r\theta+l\pi,\qquad\text{for some}\quad l\in\mathbb{Z}.
\]
By Lemma \ref{lem:ImR} (2), for each $\theta$ we have 
\begin{align*}
(p_{+}-q_{+})\pi & >\sum_{-p_{-}<k\le p_{+}}\theta_{k}(t)-\sum_{-q_{-}<k\le q_{+}}\eta_{k}(t)-r\theta\\
 & \ge\sum_{-p_{-}<k\le p_{+}}\theta_{k}(\tau e^{i\theta})-\sum_{-q_{-}<k\le q_{+}}\eta_{k}(\tau e^{i\theta})-r\theta\\
 & =(p_{+}-q_{+}-1)\pi.
\end{align*}
Since $l\in\mathbb{Z},$ we must have $l=p_{+}-q_{+}-1$. By Lemma \ref{lem:tauexistence} we conclude that $|t|=\tau$, and the result follows.
\end{proof}
\begin{lem}
\label{lem:zerosoutsidedisk}Let $\theta\in[0,\pi/r)$ be a fixed
angle with $z:=z(\theta)$ and $\tau:=\tau(\theta)$. The only zeros
in $t$ of $P(t)+zt^{r}Q(t)$ in the closed disk centered at the origin
with radius $\tau$ are $t_{0,1}=\tau e^{\pm i\theta}$. 
\end{lem}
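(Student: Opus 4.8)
The plan is to locate the zeros of $D(t,z)=P(t)+zt^rQ(t)$ in the closed disk by combining the reality of $z$ with the two monotonicity principles from Lemmas~\ref{lem:ImR} and \ref{lem:zmonotone}. Throughout I write $W(t):=|t^rQ(t)/P(t)|$. First I would record that $t_0=\tau e^{i\theta}$ and $t_1=\tau e^{-i\theta}$ are genuine zeros: by Definition~\ref{def:zthetadef} we have $z=z(\theta)=-P(t_0)/(t_0^rQ(t_0))$, so $D(t_0,z)=0$, and since $z$ is real (Lemma~\ref{lem:zmonotone}) while $P,Q$ have real coefficients, conjugation gives $D(t_1,z)=\overline{D(t_0,z)}=0$ (when $\theta=0$ these coalesce into the double zero $t_a$). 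Conversely, any zero $t^\ast$ of $D(\cdot,z)$ in the disk satisfies $-P(t^\ast)/((t^\ast)^rQ(t^\ast))=z$, which is real, so it suffices to show that the only points of the closed disk at which $-P(t)/(t^rQ(t))$ equals the real number $z$ are $t_0,t_1$. By conjugation symmetry I may restrict to $\Im t^\ast\ge0$.

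Next I would set up the two engines. The geometric input I would establish is that the curve $\tau(\theta)e^{i\theta}$ is radially decreasing, so that $\tau(\theta)\le\tau(0)=t_a$ (Lemma~\ref{lem:endlimits}) and the whole disk lies within radius $t_a$; condition~\ref{cond4} then gives $\Im R>0$ on the entire upper half of the disk, and Lemma~\ref{lem:ImR}(3) yields that on each circle $|t|=\rho\le\tau(\theta)$ the function $W$ is strictly decreasing in $\arg t\in(0,\pi)$. Combined with the strict monotonicity of $|z(\cdot)|$ (Lemma~\ref{lem:zmonotone}) and with Lemma~\ref{lem:nonzeroIm}, this is what will separate $t^\ast$ from $t_0,t_1$.

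Then I carry out the case analysis for $t^\ast=\rho e^{i\phi}$ with $0<\phi\le\pi$. Let $\psi$ be the angle with $\tau(\psi)=\rho$; radial monotonicity of the curve makes $\psi$ unique and forces $\psi\ge\theta$ precisely when $\rho\le\tau(\theta)$. If $\phi>\psi$, angular monotonicity of $W$ gives $W(t^\ast)<W(\rho e^{i\psi})=1/|z(\psi)|\le1/|z|$, hence $|P(t^\ast)/((t^\ast)^rQ(t^\ast))|>|z|$ and $t^\ast$ is not a zero (this also covers the negative real axis, $\phi=\pi$, by continuity). If $\phi<\psi$, then $\phi<\pi/r$ and $\rho<\tau(\phi)$, so $t^\ast$ lies in the interior of $\mathcal R$ and Lemma~\ref{lem:nonzeroIm} gives $\Im\bigl(P(t^\ast)/((t^\ast)^rQ(t^\ast))\bigr)\ne0$, so again $t^\ast$ is not a zero. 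If $\phi=\psi$, then $t^\ast$ is on the curve and $-P(t^\ast)/((t^\ast)^rQ(t^\ast))=z(\phi)$, which equals $z(\theta)$ only for $\phi=\theta$, i.e. $t^\ast=t_0$. The positive real axis ($\phi=0$) is handled separately: on $(0,\tau_1)$ the quantity $-P/(t^rQ)$ has sign $(-1)^{p_+-q_++1}$, opposite to that of $z$; and on $(\tau_1,\tau]\subseteq(\tau_1,t_a]$ the identity $\bigl(-P/(t^rQ)\bigr)'=\bigl(P/(t^rQ)\bigr)R(t)/t$ together with $R<0$ there (from the proof of Lemma~\ref{lem:taexistence}) shows that $|P/(t^rQ)|$ increases from $0$ up to at most $|a|<|z|$, so the value $z$ is never attained. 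Conjugating completes the lower half.

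I expect the main obstacle to be the geometric input of the second paragraph, namely that the $\tau$-curve stays inside the semidisk of radius $t_a$ (equivalently that $\Re R>0$, so $\tau(\theta)$ decreases, along the curve). This is exactly what aligns the disk with the region on which condition~\ref{cond4} provides $\Im R>0$, and what forces the matching angle $\psi$ to satisfy $\psi\ge\theta$; without it the modulus comparison in the case $\phi>\psi$ can break down. Note that at $\theta=0$ one has $R(t_a)=0$ and the first variation of the relevant angle sum vanishes, so this monotonicity is genuinely a second-order fact tied to condition~\ref{cond4}, and not something immediate from the earlier lemmas; establishing it carefully is where the real work lies.
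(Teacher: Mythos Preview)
Your overall architecture—reduce to the upper half disk, separate the interior of $\mathcal{R}$ via Lemma~\ref{lem:nonzeroIm}, handle the curve via injectivity of $z(\cdot)$, and for the rest compare $|P/(t^rQ)|$ to $|z|$ via the angular monotonicity of Lemma~\ref{lem:ImR}—is exactly the paper's. The point of departure is the ``geometric input'' you single out: you want $\tau(\theta)$ to be \emph{monotone decreasing} (equivalently $\Re R>0$ along the curve), so that $\psi$ with $\tau(\psi)=\rho$ is unique and automatically $\ge\theta$, and the whole disk sits inside radius $t_a$. You correctly flag this as unproven and as ``where the real work lies.''

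The paper sidesteps this obstacle entirely with the intermediate value theorem, and you should too. One never needs uniqueness of the matching angle, only a matching angle \emph{on the correct side of $\theta$}. Concretely: if $t\notin\mathcal R$ with $0<\Arg t<\pi/r$, then $|t|>\tau(\Arg t)$ while $|t|\le\tau(\theta)$, so continuity of $\tau$ gives some $\theta^\ast$ \emph{between} $\Arg t$ and $\theta$ with $\tau(\theta^\ast)=|t|$. Now the angular monotonicity on the circle $|t|$ (condition~\ref{cond3}, since $|t|<\tau_2$) yields
\[
\Bigl|\tfrac{P(t)}{t^rQ(t)}\Bigr|\ \lessgtr\ |z(\theta^\ast)|,
\]
with the inequality going one way if $\Arg t<\theta^\ast$ and the other if $\Arg t>\theta^\ast$; in either case $\theta^\ast$ lies between $\Arg t$ and $\theta$, so Lemma~\ref{lem:zmonotone} pushes the comparison through to $|z(\theta)|$. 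For $\pi/r\le\Arg t\le\pi$, the paper instead uses $\tau(\pi/r)=0$ and IVT to get $\theta^\ast\in(\theta,\pi/r)$ with $\tau(\theta^\ast)=|t|$. The positive real axis is done by the monotonicity of $P/(t^rQ)$ on the whole of $(0,t_a]$ (its derivative $-PR/(t^{r+1}Q)$ has constant sign there since $t_a$ is the \emph{smallest} positive zero of $PR$), without splitting at $\tau_1$ and without invoking $\tau\le t_a$.

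So the monotonicity of $\tau(\theta)$ that you identify as the crux is simply not needed: replace ``let $\psi$ be \emph{the} angle with $\tau(\psi)=\rho$'' by ``pick $\theta^\ast$ via IVT between $\Arg t$ and $\theta$ (or between $\theta$ and $\pi/r$),'' and your proof goes through with no residual obstacle.
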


\begin{proof}
Lemma \ref{lem:nonzeroIm} implies that $P(t)+zt^{r}Q(t)$ has no
zero on the region $\mathcal{R}$ in \eqref{eq:regionR} except $\tau e^{\pm i\theta}$.
Suppose $t\notin\mathcal{R}$, $|t|\le\tau$, and $\Im t\ge0$. We
consider four cases.

Suppose $t\in(0,t_{a}]$. Recall that $t_{a}$ as the smallest positive
zero of $P(t)R(t)$. Since $\frac{d}{dt}(P(t)/t^{r}Q(t))=-\dfrac{P(t)R(t)}{t^{r+1}Q(t)}$,
we see that $P(t)/t^{r}Q(t)$ is monotone on $(0,t_{a}]$. The sign
of the derivative is $(-1)^{p_{+}-q_{+}}$ for $t\ll1$  and hence 
\[
(-1)^{p_{+}-q_{+}}\frac{P(t)}{t^{r}Q(t)}\ge(-1)^{p_{+}-q_{+}}\frac{P(t_{a})}{t_{a}^{r}Q(t_{a})}.
\]
On the other hand, Lemmas \ref{lem:zmonotone} and \ref{lem:endlimits}
imply 
\[
(-1)^{p_{+}-q_{+}}z(\theta)\ge(-1)^{p_{+}-q_{+}+1}\frac{P(t_{a})}{t_{a}^{r}Q(t_{a})}.
\]
Combining these inequalities gives 
\[
(-1)^{p_{+}-q_{+}+1}\frac{P(t_{a})}{t_{a}^{r}Q(t_{a})}\geq(-1)^{p_{+}-q_{+}+1}\frac{P(t)}{t^{r}Q(t)}=(-1)^{p_{+}-q_{+}}z(\theta)\geq(-1)^{p_{+}-q_{+}+1}\frac{P(t_{a})}{t_{a}^{r}Q(t_{a})}.
\]
The conclusion $t=t_{a}$ now follows. \\
 Suppose next that $0\le\Arg t\le\theta$ and $t\notin(0,t_{a}]$.
Then $|t|>\tau(\Arg t)$. \textcolor{black}{By the intermediate value
theorem, there is $\theta^{*}\in(\Arg t,\theta]$ such that $\tau(\theta^{*})=|t|$}\textcolor{red}{.}
Lemmas \ref{lem:ImR} and \ref{lem:zmonotone} imply that 
\[
\left|\frac{P(t)}{t^{r}Q(t)}\right|<(-1)^{p_{+}-q_{+}} z(\theta^{*})\le(-1)^{p_{+}-q_{+}}z(\theta).
\]

Similarly, if $\theta<\Arg t<\pi/r$, then the inequality $|t|>\tau(\Arg t)$
implies that \textcolor{black}{there is $\theta^{*}\in(\theta,\Arg t)$
such that $\tau(\theta^{*})=|t|$} and thus 
\[
\left|\frac{P(t)}{t^{r}Q(t)}\right|>(-1)^{p_{+}-q_{+}}z(\theta^{*})>(-1)^{p_{+}-q_{+}}z(\theta).
\]

Finally, if $\pi/r\le\Arg t\le\pi$, then Lemma \ref{lem:endlimits}
implies that there is $\theta^{*}\in(\theta,\pi/r)$ such that $\tau(\theta^{*})=|t|$
and thus 
\[
\left|\frac{P(t)}{t^{r}Q(t)}\right|>(-1)^{p_{+}-q_{+}}z(\theta^{*})>(-1)^{p_{+}-q_{+}}z(\theta).
\]
In all cases we showed that if $t$ is in the closed disk with radius $\tau$ and $t \neq \tau e^{\pm i\theta}$, then $t$ cannot be a zero of $P(t)/t^r Q(t)$. The proof is thus complete.
\end{proof}

\section{Zeros of $H_{m}(z)$} \label{sec:completetheproof}

Let $t_{k}$, $0\le k<\max(n,r+s)$, be the zeros of $P(t)+zt^{r}Q(t)$.
If these zeros are distinct, then the Cauchy integral formula gives
\begin{align}
H_{m}(z) & =\frac{1}{2\pi i}\oint_{|t|=\epsilon}\frac{dt}{(P(t)+zt^{r}Q(t))t^{m+1}}\nonumber \\
 & =-\sum_{0\le k<\max(n,r+s)}\frac{1}{D_{t}(t_{k},z)t_{k}^{m+1}},\label{eq:cauchyint}
\end{align}
where
\begin{align*}
D_{t}(t_{k},z) & =P'(t_{k})-\frac{P(t_{k})}{t_{k}^{r}Q(t_{k})}\left(rt_{k}^{r-1}Q(t_{k})+t_{k}^{r}Q'(t_{k})\right)\\
 & =-\frac{P(t_{k})R(t_{k})}{t_{k}}.
\end{align*}
Substituting this expression into  (\ref{eq:cauchyint}) we obtain 
\begin{equation}
H_{m}(z)=\sum_{0\le k<\max(n,r+s)}\frac{1}{P(t_{k})R(t_{k})t_{k}^{m}}.\label{eq:Hmexpsum}
\end{equation}
We will have to exercise care when using this representation. In particular, we now know that $t_0$ and $t_1$ are distinct zeros of $P(t)+zt^{r}Q(t)$ (c.f. Lemma \ref{lem:zerosoutsidedisk}), but without further information on the remaining zeros can only separate the residues of the generating function corresponding to these, and will have to leave the remainder in an integral representation.\\ 
Let $g(t)=P(t)R(t)t^{m}$ and denote by $\left\{ \theta_{h}\right\} $,
$0\le\theta_{h}\le\pi/r$, the sequence of angles which correspond to the points $s_{h}:=\tau(\theta_{h})e^{i\theta_{h}}$ on the $\tau$-curve where 
\[
\Im g\left(s_{h}\right)=0.
\]
Let denote $\sigma\left(H_{m}(z(\theta_{h}); 0,\pi/r\right)$ the number
of sign changes of the sequence $\left\{ H_{m}(z(\theta_{h})\right\} _{h}$
on the interval $[0,\pi/r]$ where $z(0)$ and $z(\pi/r)$ are defined
by the limits in Lemma \ref{lem:endlimits}. We now show that 
\[
\sigma\left(H_{m}(z(\theta_{h}); 0,\pi/r\right)\geq \left\lfloor m/r\right\rfloor,
\]
and hence $H_{m}(z)$ has at least $\left\lfloor m/r\right\rfloor $
real zeros between ${\displaystyle -\frac{P(t_{a})}{t_{a}^{r}Q(t_{a})}}$
and $(-1)^{p_{+}-q_{+}}\infty$ by the intermediate value theorem
and Lemmas \ref{lem:zmonotone} and \ref{lem:endlimits}.

In order to establish a lower bound on the quantity $\sigma\left(H_{m}(z(\theta_{h}); 0,\pi/r\right)$, we first study the sign changes in the sequence $\left \{g(s_{h})\right \}$. To this end, let $\gamma$ be the counterclockwise
loop formed by the curve $\tau(\theta)e^{i\theta}$ and its conjugate
(see Figure \ref{fig:taucurve}). Denote by $\gamma'$ the image of $\gamma$
under the map $g(t)-\xi$, where $\xi\ne0$ is a small real number chosen so that $\xi P(0)R(0)>0$.
According to the Argument Principle, the winding number of $\gamma'$
around the origin is equal to the number of zeros of $g(t)-\xi$ inside $\gamma$,
since this function has no poles there. 
If $g(t)-\xi=0$, truncating the Taylor expansion of $P(t)R(t)$ about the origin at the constant term yields
\[
P(0)R(0)t^{m}\left(1+\mathcal{O}(t)\right)=\xi.
\]
Rearranging for $t$ yields
\[
t=\omega_{k}\sqrt[m]{\frac{\xi}{P(0)R(0)}}\left(1+\mathcal{O}(\xi{}^{1/m})\right),
\]
where $\omega_{k}=e^{2k\pi i/m}$ is an $m$-th root of $1$. \\
If we truncate the Taylor expansion of $P(t)R(t)$ about the origin at the linear term instead, we obtain
\[
P(0)R(0)t^{m}\left(1+\left(\frac{P'(0)}{P(0)}+\frac{R'(0)}{R(0)}\right)t+\mathcal{O}(t^{2})\right)=\xi,
\]
which -- after using our first estimate for $t$ -- leads to the more precise estimate 
\begin{equation} \label{eq:tasympt}
t=\omega_{k}\sqrt[m]{\frac{\xi}{P(0)R(0)}}\left(1-\frac{\omega_{k}}{m}\left(\frac{P'(0)}{P(0)}+\frac{R'(0)}{R(0)}\right)\sqrt[m]{\frac{\xi}{P(0)R(0)}}+\mathcal{O}(\xi^{2/m})\right).
\end{equation}
Computing the principal argument of both sides gives
\begin{equation}\label{eq:zerocountasympt}
\Arg t=\frac{2k\pi}{m}-\frac{\sin(2k\pi/m)}{m}\left(\frac{P'(0)}{P(0)}+\frac{R'(0)}{R(0)}\right)\sqrt[m]{\frac{\xi}{P(0)R(0)}}+\mathcal{O}(\xi^{2/m}).
\end{equation}
Equations (\ref{eq:tasympt}) and (\ref{eq:zerocountasympt}) establish that as $\xi \to 0$, $|t| \to 0$, while $\Arg(t) \to 2 k \pi /m$. Thus,  for sufficiently small $\xi$, $|t|<\tau(\Arg t)$,  and consequently
$t$ lies inside $\gamma$ if $|\Arg(t)|\leq \pi/r-\nu$ for some fixed small $\nu$ independent of $\xi$. Thus $g(t)-\xi$ has at least
\[
2\left\lfloor m/2r\right\rfloor +1
\]
zeros inside $\gamma$ close to the origin if $2r\nmid m$ (namely one for each value of $k$ between $-\lfloor m/2r\rfloor$ and $\lfloor m/2r\rfloor$), while it has at least $m/r-1$ such zeros in case $2r\mid m$.\\
In addition to the zeros found by the above asymptotic expansion, $g(t)-\xi$ has an additional zero near $t_{a}$. Indeed, $g(0)=g(t_a)=0$, $g(t) \neq 0$ on $(0,t_{a})$ and $g(t)$ is continuous and real valued on $[0,t_a]$. Furthermore, from the argument above, $g(t)-\xi$
has a simple real zero close to the origin for sufficiently small
$\xi$. It follows that $g(t)-\xi$ must also have a real zero near (but to the left of) $t_{a}$, which therefore lies inside $\gamma$. Applying argument principle to $g(t)-\xi$ we conclude that 
\[
\frac{1}{2\pi i}\oint_{\gamma}\frac{g'(t)}{g(t)-\xi}dt\ge\begin{cases}
2\left\lfloor m/2r\right\rfloor +2 & \text{ if }2r\nmid m\\
m/r & \text{ if }2r|m
\end{cases}.
\]
Note that we may partition the curve $\gamma$ into arcs $[s_{h},s_{h+1}]\subset\gamma$
and their conjugates. Having done so we obtain 
\[
\frac{1}{2\pi i}\oint_{\gamma}\frac{g'(t)}{g(t)-\xi}dt=\frac{1}{2\pi i}\sum_{h}\int_{[s_{h},s_{h+1}]\cup[\overline{s_{h+1}},\overline{s_{h}}]}\frac{g'(t)}{g(t)-\xi}dt.
\]
By the definition of the points $\{s_{h}\}$, $g(t)-\xi$ maps $[s_{h},s_{h+1}]\cup[\overline{s_{h+1}},\overline{s_{h}}]$ to a loop whose winding number around $0$ is nonzero if and only
if 
\[
\left(g(s_{h+1})-\xi\right)\left(g(s_{h})-\xi\right)<0,
\]
which -- by taking $\xi$ is sufficiently small -- implies that $g(s_{h+1})g(s_{h})\le0$.
Since $g(s_{h})=0$ if and only $\theta_{h}=0$ or $\theta_{h}=\pi/r$, we conclude that 
\[
\sigma\left(g(s_{h}),0,\pi/r\right)\ge\begin{cases}
2\left\lfloor m/2r\right\rfloor  & \text{ if }2r\nmid m\\
m/r-2 & \text{ if }2r|m
\end{cases}.
\]
In the subsequent section we prove that
\begin{itemize}
\item[(i)] For $m \gg 1$ the sign of $g(s_{h})$ is the same as the sign of $H_{m}(z(\theta_{h}))$; 
\item[(ii)] if $2r|m$ and one endpoint of $(\theta_{h},\theta_{h+1})$ is $\pi/r$
then $H_{m}(z(\theta))$ has a zero on this interval, and
\item[(iii)] the sign of $H_{m}(z(\theta))$ is $(-1)^{p_{+}}$ and $(-1)^{p_{+}+\left\lfloor m/r\right\rfloor }$
as $\theta\rightarrow0$ and $\theta\rightarrow(\pi/r)^-$ respectively. 
\end{itemize}
Using these three results we now complete the proof of Theorem \ref{thm:maintheorem}. By the intermediate value theorem $H_{m}(z(\theta))$ has at least $\left\lfloor m/r\right\rfloor -1$ zeros on $(0,\pi/r)$ each of which gives a distinct zero of $H_{m}(z)$
between $a$ and $(-1)^{p_{+}-q_{+}}\infty$ by the monotonicity of
$z(\theta)$. Since the degree of $H_{m}(z)$ is $\left\lfloor m/r\right\rfloor $ (see (iii) in Remark \ref{rem:negzeroP}), its remaining zero must be real, and the sign of $H_{m}(z)$ as $z\rightarrow(-1)^{p_{+}-q_{+}+1}\infty$ is $(-1)^{\left\lfloor m/r\right\rfloor }$ times the sign of $H_{m}(z)$ as $z\rightarrow(-1)^{p_{+}-q_{+}}\infty$. Thus by (iii), $H_{m}(z)$
has the same sign at $z=a$ as it does near $(-1)^{p_{+}-q_{+}+1}\infty$.
We conclude that $H_m(z)$ cannot change sign between $a$
and $(-1)^{p_{+}-q_{+}+1}\infty$, and must therefore have its remaining zero between $a$ and  $(-1)^{p_{+}-q_{+}}\infty$.
\section{Proofs of the three lemmas} \label{sec:threelemmas}
We begin this section with a result concerning exponential polynomials. While we will use it to establish the three claims made at the end of the previous section, the result is interesting in its own right, as exponential polynomials are objects of interest in a number of active research areas. Without striving for a completeness, we mention only a few here. Shapiro's 1958 conjecture on the zero distribution of the members of the ring or exponential polynomials motivated D'Aquino, Macintyre and Terzo to study these objects in an algebraic setting in the paper \cite{dmt}), where they attribute the origins of Shapiro's conjecture to complex analytic considerations. Exponential polynomials are also central objects in the study of decomposition of integers into sums of powers of integers (such as Vinogradov's Three primes theorem) and in the methods used in arriving at such theorems (such as the Hardy-Littlewood circle method). Finally, we note that they also appear in Weyl's criterion regarding the equidistribution of sequences. We are unaware of results akin to that of Proposition \ref{prop:signexppoly}, which shows that certain exponential polynomials have infinitely many real zeros.
\begin{prop}
\label{prop:signexppoly}Let $n \in \mathbb{N}$, and let $\omega_{k}=e^{(2k-1)\pi i/n}$ be the
$n$-th root of $-1$. For any $\ell\in\mathbb{Z}$, and $x\ge0$ such
that $\omega_{1}^{\ell}e^{x\omega_{1}}=\omega_{0}^{\ell}e^{x\omega_{0}}\in\mathbb{R}$,
the sign of 
\begin{equation}
\sum_{k=0}^{n-1}\omega_{k}^{\ell}e^{x\omega_{k}}\label{eq:exppoly}
\end{equation}
is the same as the sign of the first term. In particular, the function \eqref{eq:exppoly} -- as a function of $x$ -- has infinitely many real zeros.
\end{prop}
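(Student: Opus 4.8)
The plan is to reduce the statement to the positivity of a single explicit, $\ell$-free exponential sum, and then to read off both the sign claim and the existence of infinitely many zeros from that positivity. Write $F(x)=\sum_{k=0}^{n-1}\omega_k^\ell e^{x\omega_k}$ and $\theta_k=(2k-1)\pi/n$, so $\omega_k=e^{i\theta_k}$ (the case $n=1$ is vacuous, since the hypothesis refers to $\omega_1$). First I would record that $\{\omega_k\}$ is closed under conjugation ($\overline{\omega_k}=\omega_{n+1-k}$ with indices read mod $n$), so the terms of $F$ occur in conjugate pairs and $F(x)$ is real for every real $x$; hence ``the sign of $F$'' always makes sense, while ``the sign of the first term'' makes sense precisely under the stated hypothesis. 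That hypothesis says exactly that $\ell\pi/n+x\sin(\pi/n)=\nu\pi$ for some $\nu\in\mathbb{Z}$, in which case $\omega_0^\ell e^{x\omega_0}=\omega_1^\ell e^{x\omega_1}=(-1)^\nu e^{x\cos(\pi/n)}$, so the first term is nonzero with sign $(-1)^\nu$. Thus the sign assertion is equivalent to $(-1)^\nu F(x)>0$ at every such $x\ge0$.

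The key computation is to feed $\ell\pi/n=\nu\pi-x\sin(\pi/n)$ into the phase of each term. Writing $F(x)=\sum_k e^{x\cos\theta_k}\cos(\ell\theta_k+x\sin\theta_k)$ and using $\theta_k=(2k-1)\pi/n$, the phase of the $k$-th term becomes $(2k-1)\nu\pi+x\,\delta_k$, where $\delta_k:=\sin\theta_k-(2k-1)\sin(\pi/n)$; since $2k-1$ is odd, $\cos\big((2k-1)\nu\pi+\phi\big)=(-1)^\nu\cos\phi$. Pairing conjugate terms (and keeping the single real root $\omega=-1$ when $n$ is odd) I then obtain, at every admissible $x$,
\[
(-1)^\nu F(x)=\sum_{k=1}^{\lfloor n/2\rfloor}2e^{x\cos\theta_k}\cos(x\,\delta_k)\;\big[+\,e^{-x}\cos\!\big(n\sin(\tfrac{\pi}{n})\,x\big)\ \text{if }n\text{ is odd}\big]=:\widetilde S(x).
\]
The decisive features are that the $\ell$-dependence has disappeared and that the leading pair has $\delta_1=\sin(\pi/n)-\sin(\pi/n)=0$, so it contributes the strictly positive term $2e^{x\cos(\pi/n)}$. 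It therefore suffices to prove $\widetilde S(x)>0$ for all $x\ge0$.

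For this I would exploit the monotonicity $\cos\theta_1>\cos\theta_2>\cdots$ together with $0=|\delta_1|<|\delta_2|<\cdots$ (the latter from $\tfrac{d}{dk}|\delta_k|=2\sin(\pi/n)-\tfrac{2\pi}{n}\cos\theta_k>0$, which reduces to $\tan(\pi/n)>\pi/n$). For $0\le x<\pi/(2\max_k|\delta_k|)$ every factor $\cos(x\delta_k)$ is positive, so $\widetilde S(x)>0$ termwise; in particular $\widetilde S(0)=n>0$. The main obstacle is the complementary range. The naive bound $\cos\ge-1$ reduces matters to $\sum_{k\ge2}e^{-(\cos\theta_1-\cos\theta_k)x}<1$, but this fails near the origin (the sum equals $\lfloor n/2\rfloor-1$ at $x=0$) and, for large $n$, even at $x=\pi/\sin(\pi/n)$, because the gap $\cos\theta_1-\cos\theta_2$ is only of order $1/n^2$. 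The structural fact that rescues positivity is that $\cos(x\delta_k)$ can turn negative only once $x\ge\pi/(2|\delta_k|)$, and the monotone pairing $\cos\theta_k\downarrow$, $|\delta_k|\uparrow$ forces any such $k$ to carry the \emph{smaller} exponential weight. I therefore expect the crux of the proof to be a careful matching argument---for instance summation by parts against the two monotone sequences, or grouping each late, rapidly oscillating term with an earlier dominant one---showing that $2e^{x\cos(\pi/n)}$ outweighs the cumulative negativity uniformly in $n$.

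Finally, granting $\widetilde S>0$, the sign claim $(-1)^\nu F(x)>0$ follows at once. For the ``infinitely many zeros'' assertion, fix $\ell$: the admissible points $x_\nu=(n\nu-\ell)\pi/\big(n\sin(\pi/n)\big)$ form, for all large $\nu$, an arithmetic progression of step $\pi/\sin(\pi/n)$, and $F(x_\nu)$ has sign $(-1)^\nu$. Consecutive admissible points thus yield values of $F$ of opposite sign, so by the intermediate value theorem $F$ vanishes between them; as there are infinitely many such intervals, $F$ has infinitely many real zeros.
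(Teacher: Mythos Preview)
Your reduction to the $\ell$-free quantity $\widetilde S(x)$ is correct and clean, and the derivation of the admissible $x$'s together with the conclusion about infinitely many sign changes is fine. However, there is a genuine gap: you never prove that $\widetilde S(x)>0$ in the regime where neither your termwise argument (valid only for $x<\pi/(2\max_k|\delta_k|)$, which is of order $1$) nor the crude modulus bound applies. You acknowledge this yourself (``I therefore expect the crux of the proof to be a careful matching argument\ldots''), but you do not carry out any such argument, and the suggested tools---summation by parts against the monotone sequences $\cos\theta_k$ and $|\delta_k|$, or ad hoc grouping---do not obviously close the gap. The difficulty is real: as you observe, for large $n$ the naive bound $\sum_{k\ge2}e^{-(\cos\theta_1-\cos\theta_k)x}<1$ fails throughout an interval of length of order $n^2$, and on that interval the oscillatory factors $\cos(x\delta_k)$ are the only thing preventing the subdominant terms from overwhelming the leading $2e^{x\cos(\pi/n)}$.

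The paper resolves this with an idea that is absent from your proposal. It splits into two regimes: for $x>n^2/8$ the crude modulus estimate (sharpened by comparison with a Gaussian integral) suffices; for $x\le n^2/8$ one abandons the real-cosine form entirely and instead uses the exact Maclaurin identity
\[
\sum_{k=0}^{n-1}\omega_k^{\ell}e^{x\omega_k}=n\sum_{j\ge1}(-1)^j\frac{x^{jn-\ell}}{(jn-\ell)!}=:n\sum_{j\ge1}a_j,
\]
together with the observation that $|a_j|$ is unimodal in $j$ with peak at $j=b$ (where $x=nb-\ell+\delta$, $0\le\delta\le\pi^2/24$). An alternating-series argument then reduces the sign claim to the single inequality $|a_b|>|a_{b-1}|+|a_{b+1}|$, which is checked by elementary logarithmic estimates. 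This Maclaurin/alternating-series step is the essential missing ingredient in your proposal; without it, or a genuine substitute, the proof is incomplete.
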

\begin{proof} The result is immediate for the cases $n=1, 2$. We henceforth assume that $n \geq 3$, and (without loss of generality) that $0\leq \ell < n$. The requirements that $\omega_{1}^{\ell}e^{x\omega_{1}}=\omega_{0}^{\ell}e^{x\omega_{0}}\in\mathbb{R}$ and $x\geq0$ necessitate that 
\[
x=\frac{\pi\left(b-\frac{\ell}{n}\right)}{\sin\left(\frac{\pi}{n}\right)},\qquad \textrm{for some} \ b \in\mathbb{Z}^{+}.
\]
With this explicit expression for $x$ we also calculate 
\begin{equation}\label{eq:signdominanceexpoexplain}
\omega_{0}^{\ell}e^{x\omega_{0}}=(-1)^{b}e^{\pi\left(b-\frac{\ell}{n}\right)\cot\left(\pi/n\right)}.
\end{equation}
Thus, if $x$ is as required, then 
\[
\sum_{k=0}^{n-1}\omega_{k}^{\ell}e^{x\omega_{k}}=\omega_{0}^{\ell}e^{x\omega_{0}}\left(2+\sum_{k=2}^{n-1}\left(\frac{\omega_{k}}{\omega_{0}}\right)^{\ell}e^{x(\omega_{k}-\omega_{0})}\right)
\]
and consequently, it remains to prove that 
\[
2+\sum_{k=2}^{n-1}\frac{\omega_{k}^{\ell}e^{x\omega_{k}}}{\omega_{0}^{\ell}e^{x\omega_{0}}}>0.
\]
We note that 
\begin{eqnarray*}
\left| \sum_{k=2}^{n-1}\frac{\omega_{k}^{\ell}e^{x\omega_{k}}}{\omega_{0}^{\ell}e^{x\omega_{0}}} \right| &\leq&\sum_{k=2}^{n-1} \left| \frac{\omega_{k}^{\ell}e^{x\omega_{k}}}{\omega_{0}^{\ell}e^{x\omega_{0}}} \right| \\
&=&\sum_{k=2}^{n-1}\exp^{-1}\left(x\cos\frac{\pi}{n}-x\cos\frac{(2k-1)\pi}{n}\right)\\ 
& = & \sum_{k=2}^{n-1}\exp^{-1}\left(2x\sin\frac{k\pi}{n}\sin\frac{(k-1)\pi}{n}\right)\\
 & \stackrel{(\star)}{<} & 2\sum_{k=2}^{\left\lfloor n/2\right\rfloor }\exp^{-1}\left(\frac{x(k-1)^{2}\pi^{2}}{2n^{2}}\right)\\
 & < & 2\exp^{-1}\left(\frac{x\pi^{2}}{2n^{2}}\right)+2\int_{1}^{\infty}\exp^{-1}\left(\frac{x\pi^{2}}{2n^{2}}t^{2}\right)dt,
\end{eqnarray*}
where inequality $(\star)$ follows from the fact that $2\sin(t)>t$, for all $t<\pi/2$. Observe that 
\begin{align*}
\int_{1}^{\infty}\exp^{-1}\left(\frac{x\pi^{2}}{2n^{2}}t^{2}\right)dt & =\frac{\sqrt{2}n}{\pi\sqrt{x}}\int_{\pi\sqrt{x}/\sqrt{2}n}^{\infty}e^{-t^{2}}dt\\
 & \le\frac{2n^{2}}{\pi^{2}x}\int_{\pi\sqrt{x}/\sqrt{2}n}^{\infty}te^{-t^{2}}dt\\
 & =\frac{n^{2}}{\pi^{2}x}e^{-\pi^{2}x/2n^{2}}.
\end{align*}
We thus deduce that if $x>n^{2}/8$, then
\[
\sum_{k=2}^{n-1}\exp^{-1}\left(x\cos\frac{\pi}{n}-x\cos\frac{(2k-1)\pi}{n}\right)<2.
\]
We next consider the case when $x\le n^{2}/8$. It is easy to verify that
\begin{eqnarray*}
\frac{\pi}{n}\ge\sin\frac{\pi}{n}&\ge& \frac{\pi}{n}\left(1-\frac{\pi^{2}}{6n^{2}}\right), \qquad (n \ge 1) \\
1+2t&>&\frac{1}{1-t} \hspace{0.8 in}( t\in(0,1/2)), 
\end{eqnarray*}
and consequently
\begin{equation} \label{eq:ineqest}
nb-\ell\le x=\frac{b-\ell/n}{\sin(\pi/n)}\pi\le\left(nb-\ell\right)\left(1+\frac{\pi^{2}}{3n^{2}}\right).
\end{equation}
The first inequality in (\ref{eq:ineqest}) implies that 
\[
nb-\ell\le\frac{n^{2}}{8}
\]
which, when put in the second inequality in (\ref{eq:ineqest}) yields
\[
x\le nb-\ell+\frac{\pi^{2}}{24}.
\]
We write $x=nb-\ell+\delta$ where $0\le\delta\le\pi^{2}/24$, and expand $e^{x \omega_k}$ in a Maclaurin
series to obtain
\begin{align*}
\sum_{k=0}^{n-1} \omega_{k}^{\ell}e^{x\omega_{k}} & =\sum_{k=0}^{n-1}\omega_{k}^{\ell}\sum_{j=0}^{\infty}\frac{(x\omega_{k})^{j}}{j!}\\
 & =n\sum_{j=1}^{\infty}(-1)^{j}\frac{x^{jn-l}}{(jn-l)!}=:n\sum_{j=1}^{\infty}a_{j}.
\end{align*}
We note that the sequence $|a_{j}|$ is increasing when $j\le b-1$
and increasing when $j\ge b+1$. Since the series is alternating, the inequalities
\[
\left|\sum_{j\le b-1}a_{j}\right|\le|a_{b-1}|\qquad\text{and}\qquad\left|\sum_{j\ge b+1}a_{j}\right|\le|a_{b+1}|
\]
are immediate. Thus, the sign of \eqref{eq:exppoly} is $(-1)^{b}$, provided that
\begin{equation}
|a_{b}|>\left|a_{b-1}\right|+\left|a_{b+1}\right| \label{eq:altseriesest}
\end{equation}
with the convention that $a_{0}=0$. In order to establish (\ref{eq:altseriesest}), we
observe that if $b>1$, then 
\begin{align*}
\ln\frac{|a_{b-1}|}{|a_{b}|} & =\ln\frac{\prod_{j=1}^{n}\left(n(b-1)-\ell+j\right)}{x^{n}}\\
 & =\ln\prod_{j=1}^{n}\left(1-\frac{n-j+\delta}{x}\right)\\
 & <-\sum_{j=1}^{n}\frac{n-j+\delta}{x}\\
 & =-\frac{n\delta}{x}-\frac{n^{2}}{2x}\left(1-\frac{1}{n}\right)\\
 & <-4\left(1-\frac{1}{n}\right).
\end{align*}

Similarly
\begin{align*}
\ln\frac{\left|a_{b+1}\right|}{\left|a_{b}\right|} & =\ln\frac{x^{n}}{\prod_{j=1}^{n}(nb-\ell+j)}\\
 & =\ln\prod_{j=1}^{n}\left(1+\frac{j-\delta}{x}\right)^{-1}.
\end{align*}
The inequalities
\begin{eqnarray*}
t/2 &<& \ln(1+t)\qquad \textrm{for all} \quad t\in(0,2), \quad \textrm{and}\\
0<j-\delta&\le& n-\delta<2x
\end{eqnarray*}
imply that  
\begin{align*}
\ln\prod_{j=1}^{n}\left(1+\frac{j-\delta}{x}\right)^{-1}& \leq-\sum_{j=1}^{n}\frac{j-\delta}{2x} \\
 & =-\frac{n(n+1)}{4x}+\frac{n\delta}{2x}\\
 & <-\frac{n^{2}}{2x}\left(\frac{1}{2}-\frac{\delta}{n}\right)\\
 & <-4\left(\frac{1}{2}-\frac{\pi^{2}}{24n}\right).
\end{align*}
Thus 
\[
\frac{|a_{b-1}|}{|a_{b}|}+\frac{|a_{b+1}|}{|a_{b}|}<\exp\left(-4+\frac{4}{n}\right)+\exp\left(-2+\frac{\pi^{2}}{6n}\right)<1
\]
since $n\ge 3$. This establishes that the sign of \eqref{eq:exppoly} is determined by that of its first term. \\
With this result in hand it is now easy to see that \eqref{eq:exppoly} has infinitely many zeros, since the right hand side of \eqref{eq:signdominanceexpoexplain} changes signs infinitely many times as $x$ ranges through the real numbers. The proof is thus complete.
\end{proof}
\subsection*{The three lemmas}
We now prove the three statements preceding the completion of the proof of Theorem \ref{thm:maintheorem}. We begin by showing that the function $g(s_h)$ and $H_m(z(\theta_h))$ have the same for appropriately chosen values of $\theta_h$.
\begin{lem}\label{lem:lemma1of3} Let $g(t)=P(t)R(t)t^{m}$ and denote by $\left\{ \theta_{h}\right\} $, $0\le\theta_{h}\le\pi/r$, the sequence of angles corresponding to the points $s_{h}:=\tau(\theta_{h})e^{i\theta_{h}}$ on the $\tau$-curve where 
\[
\Im g\left(s_{h}\right)=0.
\]
For all $m\gg 1$, the sign of $g(s_{h})$ is the same as the sign of $H_{m}(z(\theta_{h}))$ for all values of $h$ under consideration.
\end{lem}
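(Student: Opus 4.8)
The plan is to isolate, in the Cauchy representation of $H_m$, the contributions of the two zeros of smallest modulus. Fix an interior angle $\theta_h$ (so that $g(s_h)\neq0$), put $z=z(\theta_h)$ and $\tau=\tau(\theta_h)$, and recall from Lemma \ref{lem:zerosoutsidedisk} that $t_0=s_h$ and $t_1=\overline{s_h}$ are the only zeros of $D(t,z)$ in the closed disk of radius $\tau$, every other zero having strictly larger modulus. Since $P$ and $Q$ are real and $z$ is real, $R$ is a real rational function, so $g(\overline{s_h})=\overline{g(s_h)}$; moreover $g(s_h)$ is real because $\Im g(s_h)=0$ is the very condition defining $s_h$. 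Extracting the residues of $1/(D(t,z)t^{m+1})$ at $t_0$ and $t_1$ (each equal to $-1/g(t_0)$, resp. $-1/g(t_1)$, when these zeros are simple) shows that their combined contribution to $H_m$ is
\[
\frac{1}{g(s_h)}+\frac{1}{g(\overline{s_h})}=\frac{2}{g(s_h)},
\]
whose sign is exactly that of $g(s_h)$. It therefore suffices to show that all remaining zeros contribute, in aggregate, an error of modulus smaller than $2/|g(s_h)|$ for all $m\gg1$, uniformly in $h$.

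I would split the admissible angles into a bulk region $[0,\pi/r-\delta]$ and a boundary region $(\pi/r-\delta,\pi/r)$ for a fixed small $\delta>0$. On the bulk region, Lemma \ref{lem:zerosoutsidedisk}, continuity of the zeros of $D(t,z(\theta))$ in $\theta$, and compactness together yield a uniform spectral gap $\min_{k\ge2}|t_k|\ge(1+\delta')\tau(\theta)$ for some $\delta'>0$. Choosing $\rho$ with $\tau<\rho<\min_{k\ge2}|t_k|$ (for instance their geometric mean), the disk $|t|\le\rho$ contains, besides the origin, only $t_0$ and $t_1$, so the residue theorem gives
\[
H_m(z)=\frac{1}{g(s_h)}+\frac{1}{g(\overline{s_h})}+\frac{1}{2\pi i}\oint_{|t|=\rho}\frac{dt}{D(t,z)\,t^{m+1}}.
\]
Bounding $|1/D|$ from above on $|t|=\rho$ (uniformly, since $z$ is bounded on the compact bulk and $D$ is zero-free there), the integral is $O(\rho^{-m})$, while the main term has modulus $2/(|P(s_h)R(s_h)|\tau^m)$; hence the relative error is $O((1+\delta')^{-m/2})$ and tends to $0$ uniformly. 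Note that where $|g(s_h)|$ is small — near $\theta=0$, where $\tau(\theta)\to t_a$ and $t_a$ is a zero of $P(t)R(t)$ (Lemmas \ref{lem:taexistence} and \ref{lem:endlimits}), forcing $g(s_h)\to0$ — this only sharpens the estimate, since $|g(s_h)|$ enters the numerator of the relative error.

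The boundary region near $\pi/r$ is the main obstacle, because there $z(\theta)\to(-1)^{p_+-q_+}\infty$, the radius $\tau(\theta)\to0$, and $r$ zeros of $D(t,z)$ collapse to the origin with asymptotically equal moduli, so the spectral gap closes and the single-dominant-term estimate fails. Here I would instead analyze the whole cluster at once. Solving $t^r=-P(t)/(zQ(t))=-\tfrac{P(0)}{zQ(0)}(1+O(t))$ shows the clustered zeros are $t_k=u\,\omega_k\,(1+O(u))$, where $u\to0$ and the $\omega_k$ are the $r$-th roots of $-P(0)/(zQ(0))$; since $z$ is real these are, up to a fixed rotation, the $r$-th roots of $-1$. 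Substituting into $\sum_k 1/g(t_k)$, using $t_k^m\approx u^m\omega_k^m e^{\beta m u\omega_k}$ for a suitable constant $\beta$, and factoring out the common scale shows that the cluster contribution is asymptotic to a constant multiple of an exponential polynomial $\sum_k\omega_k^{\ell}e^{x\omega_k}$ of exactly the form treated in Proposition \ref{prop:signexppoly}, with $n=r$, an integer $\ell\equiv-m\pmod{2r}$, and $x$ proportional to the scaled cluster radius $mu$.

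Crucially, the matching hypothesis $\omega_0^{\ell}e^{x\omega_0}=\omega_1^{\ell}e^{x\omega_1}\in\mathbb{R}$ of the proposition is precisely the condition $\Im g(s_h)=0$ defining $s_h$: the $\omega_0$ and $\omega_1$ terms are the conjugate contributions of $t_1=\overline{s_h}$ and $t_0=s_h$, and their equality and reality is equivalent to $1/g(s_h)+1/g(\overline{s_h})$ being real, i.e. to $\Im g(s_h)=0$. Moreover Lemma \ref{lem:zerosoutsidedisk} identifies $s_h$ as the zero of smallest modulus, hence the term of largest magnitude — the ``first term'' of the proposition. Proposition \ref{prop:signexppoly} then forces the sign of the cluster sum to equal the sign of the $s_h$-term, namely $\operatorname{sign}g(s_h)$; since the non-clustered zeros have moduli bounded below while the cluster radius tends to $0$, their contribution is exponentially negligible, and the sign of $H_m(z(\theta_h))$ agrees with that of $g(s_h)$. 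The delicate points I expect to grind through are the uniformity in $h$ of these asymptotics as $\theta_h\to\pi/r$ and the precise book-keeping of the rescaling that turns $\sum_k 1/g(t_k)$ into the proposition's exponential polynomial — which is exactly the situation for which Proposition \ref{prop:signexppoly} was designed.
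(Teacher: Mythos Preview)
Your overall strategy---extract the two conjugate residues, bound the remaining contributions via a contour that separates the smallest zeros from the rest, and use Proposition~\ref{prop:signexppoly} where a whole cluster of zeros collapses---is the paper's strategy, and your treatment of the boundary near $\pi/r$ is essentially correct.

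The gap is at the \emph{other} endpoint. Your ``bulk region'' is the full interval $[0,\pi/r-\delta]$, and you claim that compactness and continuity of the zeros give a uniform spectral gap $\min_{k\ge2}|t_k|\ge(1+\delta')\tau(\theta)$ there. This fails when $\rho:=\operatorname{mult}_{\tau_1}P>1$, a case the hypotheses of Theorem~\ref{thm:maintheorem} allow (indeed Lemma~\ref{lem:taexistence} explicitly treats $\tau_1=\tau_2$). When $\rho>1$ one has $t_a=\tau_1$ and hence $a=-P(t_a)/(t_a^rQ(t_a))=0$; as $\theta\to0$ the denominator $D(t,z(\theta))$ therefore converges to $P(t)$, which has a zero of multiplicity $\rho$ at $t_a$. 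Thus \emph{$\rho$} zeros of $D(t,z)$---not just the conjugate pair $t_0,t_1$---collapse to $t_a$, so for $2\le k<\rho$ the ratio $|t_k|/\tau(\theta)\to1$ and no uniform $\delta'>0$ exists. Your contour $|t|=\rho$ would then enclose the whole cluster, and the two-term main part $2/g(s_h)$ is no longer the right approximation.

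The fix is a second cluster analysis at $\theta\to0$, parallel to the one you sketched at $\theta\to\pi/r$: writing $t_k=\tau_1(1+\epsilon_k)$ and expanding, the $\rho$ small zeros organize themselves around the $\rho$-th roots of $-1$, and the sum $\sum_{0\le k<\rho}1/g(t_k)$ becomes (after a split according to whether $m\theta$ is large or small) an exponential polynomial of the type in Proposition~\ref{prop:signexppoly} with $n=\rho$ rather than $n=r$. Only after this second application of the proposition do you recover the sign agreement with $g(s_h)$. The paper also checks separately that when $\rho=1$ the double root at $t_a$ is \emph{exactly} double (via $R'(t_a)\ne0$ from \eqref{eq:Rprime}), which is what justifies your compactness argument in that sub-case; you are implicitly using this but have not verified it.
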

\begin{proof} The proof is by cases. \\
\textbf{Case 1:} $\gamma\le\theta_{h}\le\pi/r-\gamma$ for some small fixed $\gamma$ (independent of $m$)

Lemma \ref{lem:zerosoutsidedisk} implies there exists $ \epsilon >0$ such that if $2\le k<\max(n,r+s)$, then $|t_{k}|>\tau(1+2\epsilon)$. Consequently, for angles satisfying $\gamma \le \theta \le \pi/r-\gamma$,
\[
\tau^{m}(\theta)H_{m}(z(\theta))=2\Re\frac{\tau^{m}(\theta)}{P(t_{1}(\theta))R(t_{1}(\theta))t_{1}(\theta)^{m}}+\frac{\tau^{m}(\theta)}{2\pi i}\oint_{|t|=\tau(\theta)(1+\epsilon)}\frac{dt}{(P(t)+z(\theta)t^{r}Q(t))t^{m+1}}. 
\]
Note that on the contour of integration $P(t)+zt^{r}Q(t)$ is bounded away from $0$, hence the integral approaches $0$ as $m\rightarrow\infty$. The sign of $H_{m}(z)$ therefore is the same as the sign of $2\Re\frac{\tau^{m}}{P(t_{1})R(t_{1})t_{1}^{m}}$ provided this expression does not also approach $0$. Since $t_1(\theta)=\tau(\theta)e^{i \theta}$, we see that $t_{1}(\theta_h)=s_{h}$ for all $h$ under consideration.  At these points $g(t_{1}(\theta_h))\in\mathbb{R}$, and consequently the modulus of the first term is 
\[
\frac{2}{|P(t_{1}(\theta_h))R(t_{1}(\theta_h))|},
\]
which is bounded away from zero on the compact set $\gamma\le\theta_{h}\le\pi/r-\gamma$. It follows that the sign of $H_{m}(z(\theta_{h}))$ is the same as the sign of $g(s_{h})$ when $\gamma \le\theta_{h}\le \pi/r - \gamma$. 

\subsection*{Case 2: $\theta_{h}\rightarrow0$ as $m\rightarrow\infty$}

Let $\rho$ be the multiplicity of the zero $\tau_{1}$ of $P(t)$. Suppose first that $\rho=1$, that is, $\tau_1 <t_a < \tau_2$. As $\theta\rightarrow0$, the polynomial $P(t)+zt^{r}Q(t)$
approaches 
\[
P(t)+at^{r}Q(t)=\left(\frac{P(t)}{t^r Q(t)}+a\right)t^r Q(t),
\] 
which has a real zero at $t=t_{a}$ with multiplicity at least two, as a complex conjugate pair of zeros converge there.
This means in particular that
\begin{equation}\label{eq:fprimeatt_a}
\frac{d}{dt} \left(\frac{P(t)}{t^r Q(t)} \right) \Big|_{t=t_a}=0.
\end{equation}
On the other hand,  
\[
R(t)=\frac{\displaystyle{t_a \frac{d}{dt} \left(\frac{P(t)}{t^r Q(t)} \right)}}{\displaystyle{\frac{P(t)}{t^r Q(t)}}},
\]
and hence using equations \eqref{eq:fprimeatt_a} and \eqref{eq:Rprime} we conclude that
\[
0 \stackrel{\eqref{eq:Rprime}}{\neq} R'(t_a)\stackrel{\eqref{eq:fprimeatt_a}}{=}\frac{\displaystyle{t \frac{d^2}{dt^2} \left(\frac{P(t_a)}{t_a^r Q(t_a)} \right)}}{\displaystyle{\frac{P(t_a)}{t_a^r Q(t_a)}}}.
\]
Consequently, $\frac{d^2}{dt^2} \left(\frac{P(t_a)}{t_a^r Q(t_a)} \right) \neq 0$, and we conclude that the multiplicity of $t_a$ as a zero of $P(t)+zt^{r}Q(t)$ is exactly two. Thus the remaining zeros $t_k$, $2 \leq k \leq \max\{n,r\}$ still satisfy $|t_k|>(1+\epsilon)$ for some $\epsilon >0$, and the argument we gave in Case 1 still applies.\\
We next consider the case $\rho>1$. If we define the angles $\theta_{j}^{*}$,
$-p_{-}<j\le p_{+}$, and $\eta_{j}^{*}$, $-q_{-}<j\le q_{+}$, by
\[
\theta_{j}=\begin{cases}
-\theta_{j}^{*} & \text{ if }-p_{-}<j\le0\\
\pi-\pi/\rho-\theta_{j}^{*} & \text{ if }1\le j\le\rho\\
\pi-\theta_{j}^{*} & \text{ if }\rho<j\le p_{+}
\end{cases}\qquad\text{and}\qquad\eta_{j}=\begin{cases}-\eta_{j}^{*} & \text{if }-q_{-}<j\le0\\
\pi-\eta_{j}^{*} & \text{if }1\le j\le q_{+}
\end{cases},
\]
then equation \eqref{eq:sumangles} and Lemmas \ref{lem:taexistence}
and \ref{lem:endlimits} imply that $\theta_{j}^{*}\rightarrow0$ and $\eta_{j}^{*}\rightarrow0$
as $\theta\rightarrow0$. For $\theta_1$ we obtain the following estimate: 
\begin{eqnarray*}
\frac{\sin\theta_{1}}{\sin(\theta_{1}-\theta)} & = & \frac{\sin(\pi/\rho+\theta_{1}^{*})}{\sin(\pi/\rho+\theta_{1}^{*}+\theta)}\\
 & = & \frac{\sin(\pi/\rho)+\cos(\pi/\rho)\theta_{1}^{*}+\mathcal{O}(\theta_{1}^{*2})}{\sin(\pi/\rho)+\cos(\pi/\rho)(\theta_{1}^{*}+\theta)+\mathcal{O}(\theta_{1}^{*2}+\theta_{1}^{*}\theta+\theta^{2})}\\
 & = & 1-\left(\cot\frac{\pi}{\rho}\right)\theta+\mathcal{O}(\theta_{1}^{*2}+\theta_{1}^{*}\theta+\theta^{2})
\end{eqnarray*}
The corresponding estimate for the cases $-p_{-}<j\le0$ or $\rho<j$ is given by
\[
\frac{\sin\theta_{j}}{\sin(\theta_{j}-\theta)}=\frac{\theta_{j}^{*}+\mathcal{O}(\theta_{j}^{*3})}{\theta_{j}^{*}+\theta+\mathcal{O}((\theta_{j}^{*}+\theta)^{3})}=\frac{\theta_{j}^{*}}{\theta_{j}^{*}+\theta}\left(1+\mathcal{O}(\theta_{j}^{*2}+\theta^{2}+\theta_{j}^{*}\theta)\right).
\]
Similarly if $-q_{-}<j\le q_{+}$, then 
\[
\frac{\sin\eta_{j}}{\sin(\eta_{j}-\theta)}=\frac{\eta_{j}^{*}}{\eta_{j}^{*}+\theta}\left(1+\mathcal{O}(\eta_{j}^{*2}+\theta^{2}+\eta_{j}^{*}\theta)\right).
\]
For indices satisfying $-p_{-}<j\le0$ or $\rho<j$, the identity 
\[
\frac{\tau_{1}\sin\theta_{1}}{\sin(\theta_{1}-\theta)}=\frac{\tau_{j}\sin\theta_{j}}{\sin(\theta_{j}-\theta)}
\]
gives 
\[
(\theta_{j}^{*}+\theta)\tau_{1}-\tau_{1}\left(\cot\frac{\pi}{\rho}\right)\theta(\theta_{j}^{*}+\theta)=\tau_{j}\theta_{j}^{*}+\mathcal{O}((\theta_{1}^{*}+\theta_{j}^{*}+\theta)^{3})
\]
which we solve for $\theta_{j}^{*}$ and obtain
\begin{eqnarray*}
\theta_{j}^{*} & = & \frac{\tau_{1}\left(\theta-\cot(\pi/\rho)\theta^{2}\right)}{\tau_{j}-\tau_{1}+\tau_{1}\cot(\pi/\rho)\theta}+\mathcal{O}((\theta_{1}^{*}+\theta)^{3})\\
 & = & \left(\frac{\tau_{1}}{\tau_{j}-\tau_{1}}\theta-\tau_{1}\frac{\cot(\pi/\rho)\theta^{2}}{\tau_{j}-\tau_{1}}\right)\left(1-\frac{\tau_{1}\cot(\pi/\rho)\theta}{\tau_{j}-\tau_{1}}\right)+\mathcal{O}((\theta_{1}^{*}+\theta)^{3})\\
 & = & \frac{\tau_{1}}{\tau_{j}-\tau_{1}}\theta-\frac{\cot(\pi/\rho)\tau_{1}\tau_{j}}{(\tau_{j}-\tau_{1})^{2}}\theta^{2}+\mathcal{O}((\theta_{1}^{*}+\theta)^{3}).
\end{eqnarray*}
With the similar identity 
\[
\eta_{j}^{*}=\frac{\tau_{1}}{\gamma_{j}-\tau_{1}}\theta-\frac{\cot(\pi/\rho)\tau_{1}\gamma_{j}}{(\gamma_{j}-\tau_{1})^{2}}\theta^{2}+\mathcal{O}((\theta_{1}^{*}+\theta)^{3}),\qquad-q_{-}<j\le q_{+},
\]
and 
\[
\sum_{-p_{-}<j\le p_{+}}\theta_{j}^{*}-\sum_{-q_{-}<j\le q_{+}}\eta_{j}^{*}+r\theta=0,
\]
we deduce that 
\begin{align*}
\rho\theta_{1}^{*} & =-\left(\sum_{j\le0\text{ or }j>\rho}\frac{\tau_{1}}{\tau_{j}-\tau_{1}}+r-\sum_{-q_{-}<j\le q_{+}}\frac{\tau_{1}}{\gamma_{j}-\tau_{1}}\right)\theta\\
 & +\left(\sum_{j<0\text{ or }j>\rho}\frac{\cot(\pi/\rho)\tau_{1}\tau_{j}}{(\tau_{j}-\tau_{1})^{2}}-\sum_{-q_{-}<j\le q_{+}}\frac{\cot(\pi/\rho)\tau_{1}\gamma_{j}}{(\gamma_{j}-\tau_{1})^{2}}\right)\theta^{2}+\mathcal{O}(\theta^{3}).
\end{align*}

We now turn our attention to the representation given in \eqref{eq:Hmexpsum}. By Lemma \ref{lem:endlimits},
as $\theta\rightarrow0$, the function $P(t)+zt^{r}Q(t)$ approaches
$P(t)$ since $z\rightarrow a=0$. Thus, by an argument similar to
Case 1, it is sufficient to consider the sign of sum 
\begin{equation}
\sum_{0\le k<\rho}\frac{1}{P(t_{k})R(t_{k})t_{k}^{m}} \quad \textrm{as} \quad t_{k}\rightarrow t_{a}=\tau_{1}, \quad (0\le k<\rho)\label{eq:Hmsmalltheta}
\end{equation}
if after multiplication by $\tau^{m}$, the summation does not approach
$0$ as $m\rightarrow\infty$ (which is the case, as can be seen from equation \eqref{eq:Hmasympsmalltheta} and the expression in \eqref{eq:expsumsmalltheta})
Recall that
\[
z(\theta)=-\frac{P(t)}{t^r Q(t)},
\]
and by the definition of the $t_k$s,
\[
P(t_{k})+zt_{k}^{r}Q(t_{k})=0.
\]
Combining these two equations and rearranging yields the equation
\[
\frac{P(t_{k})}{P(\tau e^{i\theta})}-\frac{Q(t_{k})}{Q(\tau e^{i\theta})}\left(\frac{t_{k}}{\tau e^{i\theta}}\right)^{r}=0, \qquad (1 \leq k < \rho).
\]
We let $t_{k}=\tau_{1}+\tau_{1}\epsilon_{k}$, $0\le k<\rho$, and expand the left hand side in a Taylor series centered at $\tau_{1}$ using the identity (c.f. equation \eqref{eq:tau-t})
\[
\tau e^{i\theta}-\tau_{1}=\tau_{1}\frac{\sin\theta}{\sin(\theta_{1}-\theta)}e^{i\theta_{1}}.
\]
Doing so produces
\[
-\frac{\sin^{\rho}(\pi/\rho)\epsilon_{k}^{\rho}}{(-1)^{\rho}\theta^{\rho}}\left(1+\mathcal{O}\left(\epsilon_{k}+\theta\right)\right)-1+\mathcal{O}\left(\epsilon_{k}+\theta\right)=0,
\]
which we rearrange to get
\begin{equation}
\epsilon_{k}=-\frac{\omega_{k}}{\sin(\pi/\rho)}\theta+\mathcal{O}(\theta^{2}) \qquad (0\le k<\rho)\label{eq:epsilonsmalltheta}
\end{equation}
with $\omega_{k}=e^{(2k-1)\pi i/\rho}$. Using the estimate
\[
R(t_{k})=r-\sum_{-p_{-}<j\le p_{+}}\frac{t_{k}}{t_{k}-\tau_{j}}+\sum_{-q_{-}<j\le q_{+}}\frac{t_{k}}{t_{k}-\gamma_{j}}=\rho\frac{\sin(\pi/\rho)}{\omega_{k}\theta}+\mathcal{O}(1)
\]
together with 
\[
P(t_{k})=(-1)^{\rho}P^{(\rho)}(\tau_{1})\tau_{1}^{\rho}\omega_{k}^{\rho}\theta^{\rho}\sin(\pi/\rho)^{-\rho}\left(1+\mathcal{O}(\theta)\right),
\]
we conclude that the main term of \eqref{eq:Hmsmalltheta} is 
\begin{equation}
(-1)^{\rho+1}\frac{\sin(\pi/\rho)^{\rho-1}}{\rho P^{(\rho)}(\tau_{1})\theta^{\rho-1}\tau_{1}^{m+\rho}}\sum_{0\le k<\rho}\frac{\omega_{k}}{(1+\epsilon_{k})^{m}}.\label{eq:Hmasympsmalltheta}
\end{equation}

If $\theta\ge\delta/\sqrt{m}$ for some small $\delta,$ then \eqref{eq:epsilonsmalltheta}
implies that for each $2\le k<\rho$ 
\[
\frac{1}{|1+\epsilon_{k}|^{m}}\rightarrow0
\]
as $m\rightarrow\infty$ and thus the sign of \eqref{eq:Hmasympsmalltheta}
when $\theta=\theta_{h}$ is determined by the sign of the sum of
the first two terms which is the sign of $g(s_{h})$.

On the other hand, if $\theta<\delta/\sqrt{m}$ for $\delta\ll1$,
then the equation
\[
\Im\left(P(t_{1})R(t_{1})t_{1}^{m}\right)=0
\]
implies that 
\begin{align*}
0 & =\Im\left(\omega_{1}\left(1-\frac{\omega_{1}}{\sin(\pi/\rho)}\theta+\mathcal{O}\left(\theta^{2}\right)\right)^{-m}\right)\\
 & =\Im\left(\omega_{1}\exp\left(\frac{m\omega_{1}\theta}{\sin(\pi/\rho)}+\mathcal{O}\left(m\theta^{2}\right)\right)\right)
\end{align*}
from which we obtain the solutions
\[
\theta_{h}=\frac{h-1/\rho}{m}\pi+\mathcal{O}\left(\theta^{2}\right).
\]
We thus deduce that $g(s_{h})$ and $(-1)^{h+\rho+1}P^{(\rho)}(\tau_{1})$ carry the same sign. We claim that the sign of \eqref{eq:Hmasympsmalltheta} is also
$(-1)^{h+\rho+1}P^{(\rho)}(\tau_{1})$. Writing
\begin{align*}
(1+\epsilon_{k})^{m} & =\exp\left(-\frac{m\omega_{k}\theta}{\sin(\pi/\rho)}\right)\left(1+\mathcal{O}\left(\frac{h^{2}}{m}\right)\right),
\end{align*}
for large $m$, we see that the sign of \eqref{eq:Hmasympsmalltheta} is same as that of 
\begin{equation}
(-1)^{\rho+1}P^{(\rho)}(\tau_{1})\sum_{0\le k<\rho}\omega_{k}\exp\left(\frac{\omega_{k}(h-1/\rho)\pi}{\sin(\pi/\rho)}\right).\label{eq:expsumsmalltheta}
\end{equation}
Besides the factor $(-1)^{\rho+1}P^{(\rho)}(\tau_{1})$, the first
summand in \eqref{eq:expsumsmalltheta} is real and its sign is $(-1)^{h}$. The claim now follows from Proposition \ref{prop:signexppoly}.

\subsection*{Case 3: $\theta\rightarrow\pi/r$ as $m\rightarrow\infty$}

 If we define the angles $\theta^{*}=\pi/r-\theta$,
and $\theta_{j}^{*}$, $-p_{-}<j\le p_{+}$, and $\eta_{j}^{*}$,
$-q_{-}<j\le q_{+}$, by 
\[
\theta_{j}=\begin{cases}
\pi-\theta_{j}^{*} & \text{ if }0<j\le p_{+}\\
-\theta_{j}^{*} & \text{ if }-p_{-}<j\le0
\end{cases}\qquad\text{and}\qquad\eta_{j}=\begin{cases}
\pi-\eta_{j}^{*} & \text{if }1\le j\le q_{+}\\
-\eta_{j}^{*} & \text{if }-q_{-}<j\le0
\end{cases},
\]
then as a consequence of lemma \ref{lem:endlimits}, $\theta^*, \theta_j^*$ and $\eta_j^*$ all approach $0$ as $\theta\rightarrow\pi/r$. Using the equations \eqref{eq:taudef}
and \eqref{eq:etadef} we obtain 
\begin{eqnarray*}
\tau & = & \tau_{j}\frac{\sin\theta_{j}}{\sin(\theta_{j}-\theta)}\\
 & = & \tau_{j}\frac{\theta_{j}^{*}+\mathcal{O}(\theta_{j}^{*3})}{\sin(\pi/r)+\cos(\pi/r)(\theta_{j}^{*}-\theta^{*})+\mathcal{O}((\theta_{j}^{*}+\theta^{*})^{2})}\\
 & = & \frac{\tau_{j}\theta_{j}^{*}}{\sin(\pi/r)}\left(1-\cot\frac{\pi}{r}(\theta_{j}^{*}-\theta^{*})+\mathcal{O}((\theta_{j}^{*}+\theta^{*})^{2})\right),
\end{eqnarray*}
and 
\[
\tau=\frac{\gamma_{j}\eta_{j}^{*}}{\sin(\pi/r)}\left(1-\cot\frac{\pi}{r}(\eta_{j}^{*}-\theta^{*})+\mathcal{O}((\eta_{j}^{*}+\theta^{*})^{2})\right).
\]
We combine these identities with 
\[
\sum_{-p_{-}<j\le p_{+}}\theta_{j}^{*}-\sum_{-q_{-}<j\le q_{+}}\eta_{j}^{*}=r\theta^{*}
\]
to get 
\begin{equation}
\tau\sin\frac{\pi}{r}\sum_{-p_{-}<j\le p_{+}}\frac{1}{\tau_{j}}-\tau\sin\frac{\pi}{r}\sum_{-q_{-}<j\le q_{+}}\frac{1}{\gamma_{j}}=r\theta^{*}(1+\mathcal{O}(\theta_{j}^{*}+\eta_{j}^{*}+\theta^{*})).\label{eq:asympdifference}
\end{equation}
Thus, 
\begin{align*}
\theta_{k}^{*}\tau_{k}\left(\sum_{-p_{-}<j\le p_{+}}\frac{1}{\tau_{j}}-\sum_{-q_{-}<j\le q_{+}}\frac{1}{\gamma_{k}}\right) & =r\theta^{*}(1+\mathcal{O}(\theta^{*})), \quad \textrm{and}\\
\eta_{k}^{*}\gamma_{k}\left(\sum_{-p_{-}<j\le p_{+}}\frac{1}{\tau_{j}}-\sum_{-q_{-}<j\le q_{+}}\frac{1}{\gamma_{k}}\right) & =r\theta^{*}(1+\mathcal{O}(\theta^{*})).
\end{align*}

As $\theta\rightarrow\pi/r$, the polynomial in $t$ 
\[
\frac{P(t)}{P(\tau e^{i\theta})}-\frac{Q(t)}{Q(\tau e^{i\theta})}\left(\frac{t}{\tau e^{i\theta}}\right)^{r}
\]
has exactly $r$ zeros approaching the circle with radius $\tau$, each
of which satisfies $t_{k}/\tau\rightarrow e_{k}=e^{(2k-1)\pi i/r}$,
$0\le k<r$. This leave us to consider the sign of 
\begin{equation}
\sum_{0\le k<r}\frac{1}{P(t_{k})R(t_{k})t_{k}^{m}}.\label{eq:Hmlargetheta}
\end{equation}
Writing $t_{k}=\tau(e_{k}+\epsilon_{k})$, $\epsilon_{k}\in\mathbb{C}$,
we expand the left hand side of  
\[
\frac{P(t_{k})Q(\tau e^{i\theta})}{P(\tau e^{i\theta})Q(t_{k})}-\left(\frac{t_{k}}{\tau e^{i\theta}}\right)^{r}=0
\]
in a Taylor series centered at $\tau e^{i\theta}$ to get
\begin{equation} \label{eq:case3est}
1+\left(\frac{P'(0)}{P(0)}-\frac{Q'(0)}{Q(0)}\right)\left(e_{k}+\epsilon_{k}-e^{i\theta}\right)\tau+(e_{k}+\epsilon_{k})^{r}e^{ir\theta^{*}}=\mathcal{O}(\tau^{2}),
\end{equation}
where by \eqref{eq:asympdifference} 
\begin{equation*} 
\frac{P'(0)}{P(0)}-\frac{Q'(0)}{Q(0)}=-\sum_{-p_{-}<j\le p_{+}}\frac{1}{\tau_{j}}+\sum_{-q_{-}<j\le q_{+}}\frac{1}{\gamma_{j}}=-\frac{r\theta^{*}}{\tau\sin(\pi/r)}+\mathcal{O}(\theta^{*2}).
\end{equation*}
With 
\begin{eqnarray*}
(e_{k}+\epsilon_{k})^{r}e^{ir\theta^{*}}&=&-1-\frac{r\epsilon_{k}}{e_{k}}-ir\theta^{*}+\mathcal{O}\left(\epsilon_{k}^{2}+\theta^{*2}\right) \qquad \textrm{and}\\
\mathcal{O}(\tau^{2})&=&\mathcal{O}(\theta^{*2}),
\end{eqnarray*}
the equation \eqref{eq:case3est} can be rearranged as
 \[
-\frac{\theta^{*}}{\sin(\pi/r)}\left(e_{k}+\epsilon_{k}-e^{i\pi/r}\right)-\frac{\epsilon_{k}}{e_{k}}-i\theta^{*}=\mathcal{O}(\epsilon_{k}^{2}+\theta^{*2})
\]
from which we express $\epsilon_{k}$ as
\[
\epsilon_{k}=\frac{e_{k}\theta^{*}\left(\cos(\pi/r)-e_{k}\right)}{\sin(\pi/r)}+\mathcal{O}(\theta^{*2}) \qquad (0\le k<r).
\]
Using the estimates
\[
R(t_{k})=r-\sum_{-p_{-}<j\le p_{+}}\frac{t_{k}}{t_{k}-\tau_{j}}+\sum_{-q_{-}<j\le q_{+}}\frac{t_{k}}{t_{k}-\gamma_{j}}=r+\mathcal{O}(\theta^{*})
\]
and 
\[
P(t_{k})=P(0)(1+\mathcal{O}(\theta^{*})),
\]
we rewrite \eqref{eq:Hmlargetheta} as
\begin{equation} \label{eq:case3lastrewrite}
\frac{1}{rP(0)\tau^{m}}\sum_{0\le k<r}\frac{1}{(e_{k}+\epsilon_{k})^{m}}\left(1+\mathcal{O}(\theta^{*})\right).
\end{equation}
For the same reason as in Case 2, it suffices to consider $\theta^{*}<\delta/\sqrt{m}$
for small $\delta$, under which hypothesis the identity 
\[
\Im\left(P(t_{1})R(t_{1})t_{1}^{m}\right)=0
\]
gives 
\begin{align*}
0 & =\Im\left(e_{1}^{m}\left(1-i\theta^{*}+\mathcal{O}(\theta^{*2})\right)^{m}\right)\\
 & =\Im\left(e_{1}^{m}\exp\left(-i m\theta^{*}+\mathcal{O}(m\theta^{*2})\right)\right).
\end{align*}
Rearranging leads to
\begin{equation}
\theta_{h}^{*}=\frac{\pi}{r}-\frac{h\pi}{m}+\mathcal{O}(\theta^{*2})\label{eq:thetastarest}
\end{equation}
from which we deduce that the sign of $g(s_{h})$ agrees with that of $(-1)^{h}P(0)$.
In order to demonstrate that the sign of \eqref{eq:Hmlargetheta} is the same as that
of $(-1)^{h}P(0)$, we use a calculation analogous to that appearing at the end of Case 2 to write the main term of \eqref{eq:case3lastrewrite} as
\begin{equation}
\frac{e^{-m \theta^* \cot \pi/r}}{rP(0)\tau^{m}}\sum_{0\le k<r}e_{k}^{-m}\exp\left(\frac{m\theta^{*}e_{k}}{\sin(\pi/r)}\right)\label{eq:expsumlargetheta}.
\end{equation}
It thus remains to show that the sign of \eqref{eq:expsumlargetheta}  is $(-1)^{h}$ when $\theta^{*}=\theta_{h}^{*}$. This claim follows
from Proposition \ref{prop:signexppoly}, and completes Case 3, as well as the proof.
\end{proof}
\begin{lem} Let $H_m(z)$ be as in Theorem \ref{thm:maintheorem}, and let $z(\theta)$ be as in Definition \ref{def:zthetadef}.  If $2r \mid m$ then $H_{m}(z(\theta))$ has a zero on the interval $(\theta_{m/r-1},\pi/r)$.
\end{lem}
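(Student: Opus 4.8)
The plan is to produce a zero of the continuous map $\theta\mapsto H_{m}(z(\theta))$ on $(\theta_{m/r-1},\pi/r)$ by exhibiting a sign change there and appealing to the intermediate value theorem. All the analytic input is already available from the Case 3 analysis in the proof of Lemma \ref{lem:lemma1of3}: writing $\theta^{*}=\pi/r-\theta$, the critical angles near $\pi/r$ satisfy $\theta_{h}^{*}=\pi/r-h\pi/m+\mathcal{O}(\theta^{*2})$, the sign of $H_{m}(z(\theta_{h}))$ equals that of $(-1)^{h}P(0)$, and for $\theta^{*}<\delta/\sqrt{m}$ the sign of $H_{m}(z(\theta))$ is governed by the dominant term \eqref{eq:expsumlargetheta}. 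First I would record that $\theta_{m/r-1}$ is the last interior critical angle before $\pi/r$: it corresponds to $\theta_{m/r-1}^{*}=\pi/m$, so $\theta_{m/r-1}=\pi/r-\pi/m<\pi/r$, while $2r\mid m$ guarantees $m\ge 2r$, hence $m/r-1\ge 1$.

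Next I would compute the two boundary signs. Since $P(0)=\prod_{-p_{-}<k\le p_{+}}(-\tau_{k})$ and precisely the $p_{+}$ positive zeros contribute a negative factor, $\operatorname{sign}P(0)=(-1)^{p_{+}}$. At $\theta_{m/r-1}$ the sign of $H_{m}(z(\theta))$ is $(-1)^{m/r-1}P(0)$; because $2r\mid m$ makes $m/r$ even, this equals $(-1)^{p_{+}+1}$. For the sign as $\theta\to(\pi/r)^{-}$ I would evaluate \eqref{eq:expsumlargetheta} as $\theta^{*}\to 0^{+}$. The decisive computation is that $e_{k}^{-m}=e^{-(2k-1)m\pi i/r}=1$ for every $k$, which holds exactly because $2r\mid m$; therefore $\sum_{0\le k<r}e_{k}^{-m}\to r>0$, and the positive prefactor together with the factor $1/(rP(0)\tau^{m})$ shows that $H_{m}(z(\theta))$ has sign $(-1)^{p_{+}}$ for all $\theta$ sufficiently close to $\pi/r$. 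This is exactly the step where the hypothesis $2r\mid m$ is indispensable: if $2r\nmid m$ the phases $e_{k}^{-m}$ do not collapse to $1$, the endpoint sign changes, and no zero in the final subinterval is forced.

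Finally, the signs $(-1)^{p_{+}+1}$ at $\theta_{m/r-1}$ and $(-1)^{p_{+}}$ near $\pi/r$ are opposite, and $\theta\mapsto H_{m}(z(\theta))$ is continuous on $(\theta_{m/r-1},\pi/r)$ since $z(\theta)$ is continuous there and $H_{m}$ is a polynomial; the intermediate value theorem then supplies the desired zero. I expect the main obstacle to be the uniform justification that \eqref{eq:expsumlargetheta} controls the sign of $H_{m}(z(\theta))$ all the way down to $\theta_{m/r-1}$, where $\theta^{*}=\pi/m$ and the scaled variable $m\theta^{*}/\sin(\pi/r)$ is of order one rather than vanishing. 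Establishing this requires, as in Cases 1 and 3 of Lemma \ref{lem:lemma1of3}, that the contributions of the distant zeros $t_{k}$ with $|t_{k}|>\tau(1+\epsilon)$ and the error terms dropped in passing from \eqref{eq:Hmlargetheta} to \eqref{eq:expsumlargetheta} remain negligible for all $m\gg 1$ throughout the subinterval.
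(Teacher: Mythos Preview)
Your argument is correct and follows the same approach as the paper: compare the sign of $H_m(z(\theta))$ at $\theta_{m/r-1}$ (which by Lemma~\ref{lem:lemma1of3} and its Case~3 computation equals $(-1)^{m/r-1}\operatorname{sign}P(0)=(-1)^{p_++1}$) with its sign as $\theta\to(\pi/r)^-$ (obtained from \eqref{eq:expsumlargetheta}, where indeed $e_k^{-m}=1$ when $2r\mid m$), and apply the intermediate value theorem. Your closing concern is unnecessary: the sign at $\theta_{m/r-1}$ is supplied directly by Lemma~\ref{lem:lemma1of3}, not by the expansion \eqref{eq:expsumlargetheta}, so no uniform control of that asymptotic over the whole subinterval is required.
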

\begin{proof}
In case $2r|m$, the inequality $\theta_{h}^{*}>0$ implies
that the largest value of $h$ is $m/r-1$. Since the sign of \eqref{eq:expsumlargetheta}
is $(-1)^{\left\lfloor m/r\right\rfloor }$ as $\theta^{*}\rightarrow0^{+}$,
we conclude that $H_{m}(z(\theta))$ has a zero on the interval $(\theta_{m/r-1},\pi/r)$.
\end{proof}
\begin{lem} Let $H_m(z)$ be as in Theorem \ref{thm:maintheorem}, and let $z(\theta)$ be as in Definition \ref{def:zthetadef}. The sign of $H_{m}(z(\theta))$ is $(-1)^{p_{+}}$ as $\theta\rightarrow0$, and it is $(-1)^{p_{+}+\left\lfloor m/r\right\rfloor }$ and $\theta\rightarrow(\pi/r)-$.
\end{lem}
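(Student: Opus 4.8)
The plan is to read off the sign of $H_m(z(\theta))$ at each endpoint from the residue representation \eqref{eq:Hmexpsum}, keeping only the zeros $t_k$ of smallest modulus. By Lemma \ref{lem:zerosoutsidedisk} these are precisely the zeros that collapse onto $t_a$ as $\theta\to 0$, and onto the circle $|t|=\tau$ with $t_k/\tau\to e_k=e^{(2k-1)\pi i/r}$ as $\theta\to\pi/r$; every other zero sits a definite factor farther out, so for $m\gg 1$ its contribution is exponentially negligible. Hence near each endpoint the sign of $H_m(z(\theta))$ is the sign of the main terms \eqref{eq:Hmasympsmalltheta} and \eqref{eq:expsumlargetheta} already isolated in Cases 2 and 3, and the task reduces to evaluating those signs in the limit.

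For $\theta\to 0$ I would split on the multiplicity $\rho$ of $\tau_1$. When $\rho=1$ the pair $\tau(\theta)e^{\pm i\theta}$ merges into a double zero of $P(t)+at^rQ(t)$ at $t_a$, so by Lemma \ref{lem:endlimits} the value $H_m(a)$ equals, up to exponentially small terms, minus the residue of $1/(D(t,a)t^{m+1})$ at this double pole. Expanding $D(t,a)=\frac{1}{2}D_{tt}(t_a,a)(t-t_a)^2(1+O(t-t_a))$ and $t^{-m-1}$ about $t_a$, the residue is dominated for large $m$ by $-2(m+1)/(D_{tt}(t_a,a)t_a^{m+2})$, so $H_m(a)$ has the sign of $D_{tt}(t_a,a)$. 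Using $R=-t(P/t^rQ)'/(P/t^rQ)$ and $R(t_a)=0$ one finds $D_{tt}(t_a,a)=a\,R'(t_a)\,t_a^{r-1}Q(t_a)$; since $R'(t_a)>0$ by \eqref{eq:Rprime}, $Q$ keeps the sign $(-1)^{q_+}$ on $(0,\tau_2]$, and $a=-P(t_a)/t_a^rQ(t_a)$ has sign $(-1)^{p_++q_+}$, this is $(-1)^{p_+}$. When $\rho>1$ the factor $\sin(\pi/\rho)$ no longer degenerates and I would read off the sign of \eqref{eq:Hmasympsmalltheta}: writing $u=m\theta/\sin(\pi/\rho)$ and expanding $\sum_{0\le k<\rho}\omega_k(1+\epsilon_k)^{-m}=\sum_k\omega_k e^{u\omega_k}=\sum_{j\ge 0}\frac{u^j}{j!}\sum_k\omega_k^{j+1}$, the inner sums vanish unless $\rho\mid (j+1)$, so the lowest surviving term sits at $j=\rho-1$ and equals $-\rho\,u^{\rho-1}/(\rho-1)!<0$ as $\theta\to 0^+$. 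Combining this with the prefactor of \eqref{eq:Hmasympsmalltheta} and the identity $\operatorname{sgn}P^{(\rho)}(\tau_1)=(-1)^{p_++\rho}$ (read off from the sign $(-1)^{p_+}$ of $P$ just left of $\tau_1$) again yields $(-1)^{p_+}$.

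For $\theta\to(\pi/r)^-$ I would argue analogously from \eqref{eq:expsumlargetheta}. With $v=m\theta^*/\sin(\pi/r)$ and $\theta^*=\pi/r-\theta$, expand $T(v)=\sum_{0\le k<r}e_k^{-m}e^{ve_k}=\sum_{j\ge 0}\frac{v^j}{j!}\sum_k e_k^{\,j-m}$; the inner sums vanish unless $r\mid (j-m)$, so the lowest surviving power is $v^{\,m\bmod r}$ with coefficient of sign $(-1)^{\lfloor m/r\rfloor}$. Multiplying by $\operatorname{sgn}(1/P(0))=(-1)^{p_+}$ gives the sign $(-1)^{p_++\lfloor m/r\rfloor}$. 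This is consistent with the preceding lemma: when $r\mid m$ the survivor is the constant ($j=0$) term, so the sign $(-1)^{p_++m/r}$ holds all the way down to $\theta^*=0$, whereas the value $(-1)^{p_++m/r-1}$ recorded at the last node $\theta_{m/r-1}$ (via $\operatorname{sgn}T=(-1)^h$ from Proposition \ref{prop:signexppoly}) forces the extra sign change on $(\theta_{m/r-1},\pi/r)$ guaranteed there.

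The main obstacle is making these dominant-term approximations valid uniformly down to the endpoints. Because the leading coefficient sums $\sum_k\omega_k^{j+1}$ and $\sum_k e_k^{\,j-m}$ vanish for all small $j$, the naive limits are of $0/0$ type, and one must push the Maclaurin expansions to the first surviving order ($j=\rho-1$, respectively $j=m\bmod r$) while checking that the error terms carried in $\epsilon_k$ and in the $O(\theta^*)$ factor of \eqref{eq:expsumlargetheta} are of strictly higher order than that surviving power; this is the only delicate point and is exactly what pins the endpoint signs down unambiguously. The remainder is the routine, if careful, bookkeeping that converts $\operatorname{sgn}P^{(\rho)}(\tau_1)$ and $\operatorname{sgn}P(0)$ into powers of $-1$ in terms of $p_+$.
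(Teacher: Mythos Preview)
Your argument is correct and, for the most part, identical in spirit to the paper's: at both endpoints you Maclaurin-expand the relevant exponential sum and read off the sign from the first nonvanishing coefficient (orders $\rho-1$ and $m\bmod r$ respectively), exactly as the paper does by ``evaluating the $(\rho-1)$th derivative'' and ``the $(m\bmod r)$th derivative'' at $0$. The sign bookkeeping converting $\operatorname{sgn}P^{(\rho)}(\tau_1)$ and $\operatorname{sgn}P(0)$ into powers of $-1$ also matches.

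Where you go beyond the paper is the $\rho=1$ case at $\theta\to 0$. The paper's proof invokes \eqref{eq:expsumsmalltheta}, but that formula was derived in Case~2 only under the hypothesis $\rho>1$ (indeed $\sin(\pi/\rho)$ appears in the denominator), so the paper's treatment of this lemma is silent when $\tau_1$ is simple. Your direct residue computation at the double pole $t_a$---yielding $\operatorname{sgn}H_m(a)=\operatorname{sgn}D_{tt}(t_a,a)=\operatorname{sgn}\bigl(aR'(t_a)t_a^{r-1}Q(t_a)\bigr)=(-1)^{p_+}$---cleanly fills that gap and is a genuine addition. The identification $D_{tt}(t_a,a)=aR'(t_a)t_a^{r-1}Q(t_a)$ via $D=t^rQ\cdot(P/t^rQ+a)$ together with $R(t_a)=0$, and the strict positivity $R'(t_a)>0$ (established in the paper's Case~2 discussion for $\rho=1$), are exactly what is needed.
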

\begin{proof}
By evaluating the $(m\mod r)$th derivative of \eqref{eq:expsumlargetheta}
at $\theta^{*}=0$, we conclude that the sign of \eqref{eq:expsumlargetheta}
is $(-1)^{\left\lfloor m/r\right\rfloor }$ as $\theta^{*}\rightarrow0^{+}$ and
thus the sign of $H_{m}(z(\theta))$ as $\theta\rightarrow(\pi/r)^{-}$
is the sign of $(-1)^{\left\lfloor m/r\right\rfloor }P(0)$ which
is $(-1)^{\left\lfloor m/r\right\rfloor +p_{+}}$.\\
Finally, evaluating the $(\rho-1)$th derivative of 
\[
\sum_{0\le k<\rho}\omega_{k}\exp\left(\frac{m\omega_{k}\theta}{\sin(\pi/\rho)}\right)
\]
at $0$ we conclude that this expression is negative when
$\theta\rightarrow0$. Thus the sign of \eqref{eq:expsumsmalltheta}
is the sign of $(-1)^{\rho}P^{(\rho)}(\tau_{1})$ which is $(-1)^{p_{+}}$
by the product formula of $P(t)$.

\end{proof}


\begin{thebibliography}{1}

\bibitem{dmt} P. D'Aquino, A. Macintyre and G. Terzo, From Schanuel's conjecture to Shapiro's conjecture, {\it Comment. Math. Helv.}, {\bf 89} (3), pp. 597-616. DOI 10.4171/CMH/328 

\bibitem{es} R. Ellard, H. \v Smigoc, An extension of the Hermite-Biehler theorem with application to polynomials with one positive root. https://arxiv.org/pdf/1701.07912.pdf

\bibitem{ft}T. Forg\'acs and K. Tran, Zeros of polynomials generated by a rational function with a hyperbolic-type denominator, {\it Constr. Approx.} {\bf 46} (2017), pp. 617-643.  DOI 10.1007/s00365-017-9378-2  

\bibitem{ft-1}T. Forg\'acs, K. Tran, Polynomials with rational generating
functions and real zeros, {\it J. Math. Anal. Appl.} {\bf 443} (2016), 631-651.

\bibitem{hdb1} M-T. Ho, A. Datta, S.P. Bhattacharyya, Generalizations of the Hermite-Biehler theorem: the complex case, {\it Linear Algebra Appl.} {\bf 320} (2000) 23--36.

\bibitem{hdb2} M-T. Ho, A. Datta, S.P. Bhattacharyya, Generalizations of the Hermite-Biehler theorem, {\it Linear Algebra Appl.} {\bf 302-202} (1999) 135--153.

\bibitem{hormander} H\"ormander, L., An Introduction to Complex
Analysis in Several Variables, 3rd ed. 1990, North Holland. 

\bibitem{rs} Rahman, Q.~I., Schmeisser, G., Analytic Theory of Polynomials. London Math. Soc. Monogr. (N. S.), vol. 26. Oxford University Press, New York (2002).

\bibitem{tran}K. Tran, Connections between discriminants and the root distribution of polynomials with
rational generating function, {\it J. Math. Anal. Appl.} {\bf 410} (2014), pp. 330-340.

\bibitem{tz} K. Tran, A. Zumba, Zeros of polynomials with four-term recurrence. {\it Involve, a Journal of Mathematics} {\bf 11} (2018), No. 3, 501--518.

\bibitem{tz-1} K. Tran, A. Zumba, Zeros of polynomials with four-term recurrence and linear coefficients, \\ https://arxiv.org/abs/1808.07133
\end{thebibliography}
\end{document}